  \def\fqs{\mathbb{F}_{q^2}}
\def\fq{\mathbb{F}_q}
\def\cC{\mathcal C}
\def\cX{\mathcal X}
\def\K{\mathbb{K}}
\def\div{{\rm div}}
\def\fqpr{\mathbb{F}_{q^{'}}}
\newtheorem{lemma}{Lemma}
\newtheorem{theorem}[lemma]{Theorem}
\newtheorem{proposition}[lemma]{Proposition}
\newtheorem{definition}[lemma]{Definition}
\newtheorem{remark}[lemma]{Remark}
\begin{document}

\title[Arcs and caps from cubic curves]{Complete arcs and complete caps from cubics with an isolated double point}
\thanks{This research was supported by the Italian Ministry MIUR, 
Geometrie di Galois e strutture di incidenza,
PRIN 2009--2010, 
by INdAM, and by Tubitak Proj. Nr. 111T234.}
\author[N.Anbar, D.Bartoli, M. Giulietti, I.Platoni]{Nurdag\"ul Anbar         \and Daniele Bartoli \and Massimo Giulietti \and
        Irene Platoni 
}


\thanks{Nurdag\"ul Anbar -
Faculty of Engineering and Natural Sciences - Sabanci University\\ Orhanli-Tuzla - 34956 Istanbul - Turkey.  
              \email{nurdagul@su.sabanciuniv.edu}}           
           
\thanks{Daniele Bartoli -
           Dipartimento di Matematica e Informatica - University of Perugia\\ Via Vanvitelli 1 - 06123 Perugia - Italy.
              \email{bartoli@dmi.unipg.it}}

\thanks{           Massimo Giulietti -
           Dipartimento di Matematica e Informatica - University of Perugia\\ Via Vanvitelli 1 - 06123 Perugia - Italy.
              \email{giuliet@dmi.unipg.it}
}

\thanks{Irene Platoni -
Dipartimento di Matematica - University of Trento\\ Via Sommarive, 14 - 38123, Povo (TN) -  Italy. \email{irene.platoni@unitn.it}}

\begin{abstract} Small complete arcs and caps in Galois spaces over finite fields $\fq$ with characteristic greater than $3$ are constructed from cubic curves with an isolated double point. For $m$ a divisor  of $q+1$, 
complete plane arcs of size approximately $q/m$ are obtained, provided that $(m,6)=1$ and 
$m<\frac{1}{4}q^{1/4}$. If in addition $m=m_1m_2$ with $(m_1,m_2)=1$, then 
complete caps of size approximately
$\frac{m_1+m_2}{m}q^{N/2}$
in affine spaces of dimension $N\equiv 0 \pmod 4$ are constructed.
\end{abstract}

\maketitle

\section{Introduction}

In an (affine or projective) space over a 
finite field, a cap  is a set of points no
three of which are collinear. A cap is said to be complete if it is maximal with respect to set theoretical inclusion. Plane caps are usually called arcs.

Arcs and caps have
played an important role in Finite Geometry since the pioneering work by B. Segre \cite{MR0238846}.
These objects are relevant also in  Coding Theory, being the geometrical counterpart of distinguished types of error-correcting and covering linear codes.  In this direction, an important issue is to ask for explicit constructions of small complete  caps in Galois spaces.
In fact, complete caps correspond to quasi-perfect linear codes with covering radius $2$, so that the smaller is the size of the cap, the better is the density of the covering code.

The trivial lower bound for the size of a complete cap in a Galois space of dimension $N$ and order $q$ is 
\begin{equation}\label{TrivialLB}
\sqrt 2  q^{(N-1)/2}.
\end{equation}
If $q$ is even and $N$ is odd, such bound is substantially sharp; see \cite{MR1395760}. Otherwise, all known infinite families of complete caps have size far from \eqref{TrivialLB}; see the survey papers \cite{HS1,HirschfeldStorme2001} and the more recent works \cite{ABGP,Anb-Giu,BarGiuPrep,DAOS,MR2656392,Gjac,Gjcd,GPIEE}.
For $q=p^s$ with $p>3$ a prime, the smallest explicitly described complete plane caps 
are due to Sz\H onyi,
who constructed complete arcs with roughly $q/m$ points for any
 divisor $m$ of $q-1$ satisfying  $m<
\frac{1}{C}
 q^{1/4}$, 
with $C>1$ is a constant independent of $q$
\cite{MR1221589,TamasRoma}\footnote{The condition of $m$ being a divisor of $q-1$ was not originally required in \cite{TamasRoma}, but is actually needed in order for the proof of a key lemma by Voloch to be correct; see Remark 4 in \cite{Anb-Giu}.}
. From these arcs, by using some lifting methods, complete caps of size roughly $q^{N/2}/\sqrt{m}$ in $AG(N,q)$, $N\equiv 0 \pmod 4$, are obtained in \cite{ABGPnodal,Anb-Giu}. 
The aim of this paper is to obtain similar results for the case where  $m$ is a divisor of $q+1$, in order to significantly widen the range of $q$'s for which
complete arcs in $AG(2,q)$ of size about $q^{3/4}$, as well as complete caps in $AG(N,q)$ with roughly $q^{(4N-1)/8}$ points, can actually be constructed.
To this end, plane cubics with an isolated double points are considered.


Let $G$ denote 
the abelian group  of the non-singular $\fq$-rational points of an irreducible plane cubic $\cX$ defined over $\fq$. It was already noted by Zirilli \cite{ZIR} that no three points in a coset $A$ of a subgroup $K$ of $G$  can be collinear, provided that the index $m$ of $K$ in $G$ is not divisible by $3$. Since then, arcs in cubics have been thoroughly investigated, as well as caps arising from these arcs by recursive constructions; see \cite{ABGP,ABGPnodal,Anb-Giu,FaPaSc,GFFA,HV,MR792577,MR1221589,TamasRoma,VOL,MR1075538}. 
However, no results about arcs and caps from cubics with an isolated double point have appeared so far. One of the problems that come up when dealing with these cubics  is that the natural parametrization of the points of $A$, arising from the natural isomorphism between $G$ and the subgroup of order $q+1$ of the multiplicative group of $\mathbb F_{q^2}$, involves  polinomial functions defined over $\mathbb F_{q^2}$ but not over $\fq$. This makes it impossible a straightforward application of the classical method by 
Segre \cite{MR0149361} and Lombardo Radice \cite{LombRad}
for proving that a point $P$ off $\cX$ is collinear with two points in $A$; in fact, such method needs that the algebraic curve $\cC$ describing the collinearity with $P$ and two generic points in $A$ is defined over $\fq$.  A key point of the paper is to  overcome  such a difficulty by finding a curve which is birationally equivalent to $\cC$, but is defined over $\fq$; see Lemmas \ref{irrmodificato} and \ref{troppe}.

The main achievements here are Theorems \ref{unouno} and \ref{duedue}.
For a divisor $m$ of $q+1$ such that $(m,6)=1$ and $m\le \sqrt[4]{q}/4$,
we explicitly describe a complete arc of size approximately $m+\frac{q+1}{m}$; 
if in addition $m$ admits a non-trivial factorization $m=m_1m_2$ with $(m_1,m_2)=1$, we also provide 
complete caps  of size approximately $\frac{m_1+m_2}{m}q^{N/2}$ in affine spaces $AG(N,q)$ with dimension $N\equiv 0 \pmod 4$.

The paper is organized as follows.
 In Section 2 we review some of the standard facts on curves and algebraic  function fields. We also briefly sketch a recursive construction from \cite{Gjac} of complete caps from bicovering arcs, that is arcs for which completeness holds in a stronger sense; see Definition \ref{bico}.  Section 3 presents some preliminary results on the algebraic curve describing the collinearity with $P$ and two generic points in $A$. The proof that under our assumptions on $m$ almost each point $P$ not on $\cX$ is bicovered by the secants of $A$ is  the main object of Section 4; see Propositions \ref{archinuovi}, \ref{archinuoviVAR}, and \ref{archibishop}. The case where $P$ lies in $\cX$ is dealt with in Proposition \ref{bicointerni}. Finally, the proof of our main results is completed in Section 5.

\section{Preliminaries}
Let $q$ be an odd prime power, and let $\fq$ denote the finite field with $q$ elements. Throughout the paper,  $\K$ will denote the algebraic closure of $\fq$.

\subsection{Curves and function fields}


Let $\cC$ be a projective absolutely irreducible algebraic curve, defined over the algebraic closure $\K$ of $\fq$. 
An \textit{algebraic function field $F$ over $\K$} is an extension $F$ of $\K$ such that $F$ is a finite algebraic extension of $\K(x)$, for some element $x\in F$ transcendental over $\K$. If $F=\K(x)$, then $F$ is called the \textit{rational function field over $\K$}.
%
For basic definitions on function fields we refer to \cite{STI}.

It is well known that to any  curve $\cC$ defined over $\K$ one can associate a function field $\K(\cC)$ over $\K$, namely the field of the rational functions of $\cC$.
Conversely, to a  function field $F$ over $\K$ one can associate a curve $\cC$, defined over $\K$, such that $\K(\cC)$ is $\K$-isomorphic to $F$. The genus of $F$ as a function field coincides with the genus of $\cC$.

A place $\gamma$ of $\K(\cC)$ can be associated to a single point of $\cC$ called the \textit{center} of $\gamma$, but not vice versa. A  bijection between places of $\K(\cC)$ and points of $\cC$ holds provided that the curve $\cC$ is non-singular. 

Let $F$ be a function field over $\K$. If $F'$ is a finite extension of  $F$, then a place $\gamma '$ of $F'$ is said to be \textit{lying over} a place $\gamma$ of $F$
, 
if $\gamma\subset \gamma '$. This holds precisely when $\gamma = \gamma ' \cap F$. In this paper $e\left(\gamma ' | \gamma \right)$ will denote the \textit{ramification index} of $\gamma '$ over $\gamma$.

A finite extension $F'$ of a function field $F$ is said to be {\em unramified} if $e(\gamma'|\gamma)=1$ for every  $\gamma'$
place of $F'$ and every $\gamma$ place of $F$ with  $\gamma'$ lying over $\gamma$.


\begin{proposition}[Proposition 3.7.3 in \cite{STI}]\label{teo1}
Let $F$ be an algebraic function field over $\K$, and let $m>1$ be an integer relatively prime to the characteristic of $\K$. Suppose that $u\in F$ is an element satisfying
$
u \neq \omega^e\mbox{ for all }\omega \in F \mbox{ and } e|m\mbox{, }e>1.
$
Let
\begin{equation}
 F'=F(y)\mbox{ with }y^m=u.
\end{equation}
Then
\begin{itemize}
\item[(i)] for $\gamma'$ a place of $F'$ lying over a place $\gamma$ of $F$, we have
$
e(\gamma'| \gamma)=\frac{m}{r_\gamma}
$
where
\begin{equation}\label{eq55}
 r_\gamma:=(m,v_{\gamma}(u))>0
\end{equation}
is the greatest common divisor of $m$ and $v_\gamma(u)$;
\item[(ii)] if  $g$ (resp. $g'$) denotes the genus of $F$ (resp. $F'$) as a function field over $\K$, then
$$
g'=1+m\left( g-1+\frac{1}{2}\displaystyle\sum_{\gamma} \left(1-\frac{r_\gamma}{m}\right) \right),
$$
where $\gamma$ ranges over the places of $F$ and $r_\gamma$ is defined by \eqref{eq55}. 
\end{itemize}
\end{proposition}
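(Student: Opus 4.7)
The plan is to establish part (i) by combining elementary valuation calculus on the defining equation $y^m=u$ with the Galois-theoretic structure of Kummer extensions, and then to derive part (ii) as a direct application of the Riemann--Hurwitz genus formula, exploiting that the extension is tamely ramified since $\gcd(m,\mathrm{char}(\K))=1$.

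For the divisibility $(m/r_\gamma)\mid e(\gamma'|\gamma)$ in (i), which is essentially formal, let $\gamma'$ be a place of $F'$ above $\gamma$ and apply $v_{\gamma'}$ to $y^m=u$. Since $v_{\gamma'}|_F=e(\gamma'|\gamma)\,v_\gamma$, we obtain
\[
m\,v_{\gamma'}(y)=v_{\gamma'}(u)=e(\gamma'|\gamma)\,v_\gamma(u),
\]
so $m$ divides $e(\gamma'|\gamma)\,v_\gamma(u)$ and hence $m/r_\gamma$ divides $e(\gamma'|\gamma)$. For the reverse estimate I would use that the standing hypothesis on $u$ forces $F'/F$ to be a Kummer extension of degree exactly $m$, Galois with group $\mathbb{Z}/m\mathbb{Z}$; note that the full group of $m$-th roots of unity lies in $\K$, since $\K$ is algebraically closed of characteristic coprime to $m$. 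By Galois theory, all places above $\gamma$ share the same ramification index $e$, and since $\K$ is algebraically closed all residue degrees equal $1$, so the fundamental equality reduces to $ne=m$, where $n$ is the number of places above $\gamma$. It therefore suffices to exhibit at least $r_\gamma$ distinct places above $\gamma$. Setting $t=m/r_\gamma$ and $s=v_\gamma(u)/r_\gamma$, the element $y^t$ generates an intermediate field $F_1=F(y^t)$ satisfying $(y^t)^{r_\gamma}=u$; the same Kummer criterion gives $[F_1:F]=r_\gamma$. Choosing a uniformizer $\pi$ at $\gamma$, the element $z=y^t\pi^{-s}\in F_1$ satisfies $z^{r_\gamma}=u\pi^{-v_\gamma(u)}$, a $\gamma$-unit. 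Hensel's lemma then splits $X^{r_\gamma}-u\pi^{-v_\gamma(u)}$ into $r_\gamma$ distinct linear factors over the completion $\hat F_\gamma$, producing $r_\gamma$ unramified places of $F_1$ above $\gamma$, each of which extends to at least one place of $F'$.

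For part (ii), tame ramification gives the different exponent $d(\gamma'|\gamma)=e(\gamma'|\gamma)-1=m/r_\gamma-1$. Summing over the $r_\gamma$ places of $F'$ above each $\gamma$ contributes $r_\gamma(m/r_\gamma-1)=m-r_\gamma$ to the degree of the different, a sum which is finite since $r_\gamma=m$ at all unramified places. Plugging into Riemann--Hurwitz
\[
2g'-2=m(2g-2)+\deg\mathrm{Diff}(F'/F)
\]
and rearranging yields the claimed formula for $g'$. The principal obstacle is the lower-bound half of (i), namely producing $r_\gamma$ distinct places of $F'$ above $\gamma$ via the explicit unramified intermediate extension $F_1/F$ together with Hensel's lemma; once this is in place, (ii) is routine bookkeeping with Riemann--Hurwitz.
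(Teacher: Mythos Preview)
The paper does not give its own proof of this proposition: it is quoted verbatim as Proposition~3.7.3 of Stichtenoth's textbook and used as a black box throughout. There is therefore nothing in the paper to compare your argument against.

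That said, your proposed proof is correct and is essentially the standard argument one finds in Stichtenoth. A couple of minor remarks: when you invoke ``the same Kummer criterion'' to conclude $[F_1:F]=r_\gamma$, you are implicitly using that every divisor $e>1$ of $r_\gamma$ is also a divisor of $m$, so the hypothesis on $u$ carries over; this is true but worth saying explicitly. In the Hensel step, you should note that the residue field at $\gamma$ is the algebraically closed field $\K$, so the reduction of $X^{r_\gamma}-u\pi^{-v_\gamma(u)}$ already has $r_\gamma$ simple roots, which is what makes Hensel apply. Finally, the logical flow at the end of~(i) is: from $(m/r_\gamma)\mid e$ you get $e\ge m/r_\gamma$, while from $n\ge r_\gamma$ and $ne=m$ you get $e\le m/r_\gamma$; hence $e=m/r_\gamma$ exactly. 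Part~(ii) is, as you say, routine Riemann--Hurwitz once~(i) is established.
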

An extension such as $F'$ in Proposition \ref{teo1} is said to be a {\em Kummer extension} of $F$.


 A curve $\cC$ is said to be defined over $\fq$ if the ideal of $\cC$ is generated by polynomials with coefficients in $\fq$. 
In this case, $\fq(\cC)$ denotes the subfield of $\K(\cC)$ consisting of the rational functions defined over $\fq$.
A place of $\K(\cC)$ is said to be $\fq$-rational if it is fixed by the Frobenius map on $\K(\cC)$. The center of an $\fq$-rational place is an  $\fq$-rational point of $\cC$; conversely, if $P$ is a simple $\fq$-rational point of $\cC$, then the only place centered at $P$ is $\fq$-rational. 
The following result is a corollary to Proposition \ref{teo1}.
\begin{proposition}\label{teo1cor}
Let $\cC$ be an irreducible plane curve of genus $g$ defined over $\fq$. Let $u\in \fq(\cC)$ be a non-square in $\K(\cC)$. 
Then the Kummer extension $\K(\cC)(w)$, with $w^2=u$, is the function field of some irreducible curve defined over $\fq$ of genus
$$
g'=2g-1+\frac{M}{2},
$$
where $M$ is the number of places of $\K(\cC)$ with odd valuation of $u$.
\end{proposition}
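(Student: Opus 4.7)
The plan is to specialize Proposition \ref{teo1} to the case $m=2$, simplify the resulting genus sum by observing that the local ramification invariants take only two possible values, and then add a short descent argument to see that the resulting extension comes from a curve defined over $\fq$.

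First I would apply Proposition \ref{teo1} with $m=2$, $F=\K(\cC)$, and the given $u$. The hypothesis that $u$ is a non-square in $\K(\cC)$ ensures the hypothesis ``$u\neq \omega^e$ for all $\omega\in F$ and $e\mid m$, $e>1$'' of Proposition \ref{teo1}, since for $m=2$ the only relevant $e$ is $e=2$. Part (ii) of that proposition gives
\begin{equation*}
g'=1+2\left(g-1+\tfrac{1}{2}\sum_{\gamma}\!\left(1-\tfrac{r_\gamma}{2}\right)\right)=2g-1+\sum_{\gamma}\!\left(1-\tfrac{r_\gamma}{2}\right),
\end{equation*}
where $\gamma$ ranges over the places of $\K(\cC)$ and $r_\gamma=\gcd(2,v_\gamma(u))$.

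The next step is to simplify the sum. Since $r_\gamma\in\{1,2\}$, one has $r_\gamma=1$ precisely when $v_\gamma(u)$ is odd, in which case $1-r_\gamma/2=1/2$; otherwise $r_\gamma=2$ and $1-r_\gamma/2=0$. Thus only places with odd valuation of $u$ contribute, each by $1/2$, giving $\sum_\gamma(1-r_\gamma/2)=M/2$ and hence $g'=2g-1+M/2$. Note that $M$ is automatically finite, since $v_\gamma(u)\neq 0$ only at the finitely many zeros and poles of $u$.

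Finally I would justify the rationality statement. Since $u\in\fq(\cC)\subseteq \K(\cC)$ and $u$ is not a square in $\K(\cC)$, it is a fortiori not a square in $\fq(\cC)$, so $F_0:=\fq(\cC)(w)$ with $w^2=u$ is a proper quadratic extension of $\fq(\cC)$, defined by a polynomial with coefficients in $\fq(\cC)$. Moreover, the constant field $\fq'$ of $F_0$ must be $\fq$: otherwise $\fq'\supseteq\fqs$ would force $F_0=\fqs(\cC)$ and $u$ would be a square in $\fqs(\cC)\subseteq \K(\cC)$, contradicting the hypothesis. Hence $F_0$ is the function field of a geometrically irreducible curve defined over $\fq$, and its constant field extension to $\K$ coincides with $\K(\cC)(w)$, so the genus computed above is indeed the genus of this $\fq$-rational model.

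I do not expect any serious obstacle here; the only point that requires some care is the constant-field descent in the last paragraph, which is what distinguishes the corollary from a purely formal specialization of Proposition \ref{teo1}.
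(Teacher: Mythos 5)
Your argument is correct and is essentially the proof the paper intends: the paper states this result only as a corollary of Proposition \ref{teo1} (the case $m=2$, with $r_\gamma=1$ exactly at the $M$ places of odd valuation), which is precisely your genus computation. Your additional constant-field descent paragraph, showing $\fq(\cC)(w)$ has full constant field $\fq$ because otherwise $u$ would become a square in $\fqs(\cC)\subseteq\K(\cC)$, correctly fills in the rationality claim that the paper leaves implicit.
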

The function field $\K(\cC)(w)$ as in Proposition \ref{teo1cor} is said to be a {\em double cover} of $\K(\cC)$ (and similarly the corresponding irreducible curve defined over $\fq$ is called a double cover of $\cC$).

Finally, we recall the Hasse-Weil bound, which will play a crucial role in our proofs.
\begin{proposition}[Hasse-Weil Bound - Theorem 5.2.3 in \cite{STI}]\label{HaWe}
The number $N_q$ of $\fq$-rational places of the function field $\K(\cC)$ of a curve $\cC$ defined over $\fq$ with genus $g$ satisfies
$$
|N_q-(q+1)|\le 2g \sqrt q.
$$
\end{proposition}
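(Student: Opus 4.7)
The plan is to derive this classical inequality via the zeta function of $\cC/\fq$, following Weil's strategy in the streamlined form due to Bombieri. The first step is to package the numbers $N_{q^n}$ of $\fq$-rational places of the function field of $\cC$ (for every $n\ge 1$) into a single generating object. I would introduce the formal power series
$$
Z(t)\;:=\;\exp\Bigl(\sum_{n\ge 1} N_{q^n}\,\frac{t^n}{n}\Bigr),
$$
so that knowing the sequence $\{N_{q^n}\}$ is equivalent to knowing $Z(t)$ and its logarithmic derivative.

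The second step is to prove that $Z(t)$ is in fact a rational function of the form $Z(t)=L(t)/\bigl((1-t)(1-qt)\bigr)$, where $L(t)\in\mathbb Z[t]$ has degree $2g$ and constant term $1$. This is the standard Riemann--Roch argument: one rewrites $Z(t)$ as a sum over effective divisors of $\K(\cC)$, splits it by degree, and uses that for $n>2g-2$ the number of effective divisors of degree $n$ equals $h\,(q^{n+1-g}-1)/(q-1)$, with $h$ the class number. Serre duality (via Riemann--Roch applied to the canonical class) yields the functional equation $L(t)=q^{g}t^{2g}L\bigl(1/(qt)\bigr)$, whence one can factor $L(t)=\prod_{i=1}^{2g}(1-\alpha_i t)$ over $\mathbb C$ with reciprocal roots paired as $(\alpha_i,q/\alpha_i)$. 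Comparing logarithmic derivatives gives the trace formula
$$
N_{q^n}\;=\;q^n+1-\sum_{i=1}^{2g}\alpha_i^{\,n}\qquad(n\ge 1),
$$
so the proposition reduces to the bound $|\alpha_i|=\sqrt{q}$ for every $i$.

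The third step, which is the actual obstacle, is establishing this Riemann hypothesis for the $\alpha_i$. The most efficient elementary route is the Stepanov--Bombieri method: working in a separable model $\fq(\cC)=\fq(x,y)$ one constructs, by a Riemann--Roch dimension count, a nonzero auxiliary function of the shape $f(x)+g(x)y^{q^n}$ that vanishes to a prescribed high order at each of the $N_{q^n}$ Frobenius-fixed places, while having pole divisor of controlled degree. Comparing the total number of zeros with the total pole order yields an upper estimate $N_{q^n}\le q^n+1+c(g)\,q^{n/2}$, and the functional equation converts this into a matching lower bound. Feeding these uniform bounds back into the trace formula and letting $n\to\infty$ pins down the exponential growth rate of $\sum_i\alpha_i^{\,n}$ and forces $|\alpha_i|=\sqrt q$. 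Specialising to $n=1$ and using the triangle inequality on the $2g$ contributions gives the stated estimate $|N_q-(q+1)|\le 2g\sqrt{q}$.

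The delicate part of the argument is the Stepanov construction itself, where one must simultaneously keep the relevant Riemann--Roch space small enough for a dimension count to produce a nonzero function and make the vanishing order along the Frobenius graph large enough to be a genuine constraint; this balance is precisely what generates the constant $2g$ in the final bound.
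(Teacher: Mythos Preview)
Your outline is a reasonable sketch of the Weil--Bombieri route to the Hasse--Weil bound, but there is nothing to compare it against: the paper does not prove this proposition at all. It is quoted verbatim as Theorem~5.2.3 of Stichtenoth's textbook and used as a black box in the subsequent counting arguments. So the ``paper's own proof'' is simply a citation.

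On the substance of your sketch: the overall architecture (rationality of $Z(t)$ via Riemann--Roch, functional equation, then the Riemann hypothesis for the reciprocal roots) is standard and correct. Two places deserve more care if you ever flesh this out. First, the Stepanov--Bombieri auxiliary function is not literally of the shape $f(x)+g(x)\,y^{q^n}$; one works with a carefully chosen Riemann--Roch space at a fixed rational place and imposes vanishing along the graph of Frobenius, and the precise form depends on whether one follows Bombieri or Stepanov. Second, the passage ``the functional equation converts this into a matching lower bound'' hides a genuine step: one typically obtains the lower bound for $N_{q^n}$ by applying the upper bound over $\mathbb F_{q^{2n}}$ and exploiting the relation $N_{q^{2n}} = q^{2n}+1-\sum_i \alpha_i^{2n}$ together with the pairing $\alpha_i\alpha_{i'}=q$, rather than by a direct appeal to the functional equation. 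These are refinements, not gaps, but they are exactly where the constant $2g$ (as opposed to some larger $c(g)$) must be pinned down.
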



\subsection{Complete caps from bicovering arcs}

Throughout this section, $N$ is assumed to be a positive integer
divisible by $4$. Let $q'=q^\frac{N-2}{2}$. Fix a basis of $\fqpr$
as a linear space over $\fq$, and identify points in $AG(N,q)$
with vectors of $\fqpr\times \fqpr \times \fq \times \fq$. 

For an arc ${\mathcal A}$ in $AG(2,q)$, let
$$C_{\mathcal A}=\{(\alpha,\alpha^2,u,v)\in AG(N,q)\mid \alpha \in
\fqpr\,,\,\,\,(u,v)\in {\mathcal A}\}\,.$$
As noticed in \cite{Gjac}, the set $C_{\mathcal A}$ is a cap whose completeness in $AG(N,q)$ depends on the bicovering properties of ${\mathcal A}$ in $AG(2,q)$, defined as follows.
According to Segre \cite{MR0362023}, given three pairwise
distinct  points $P,P_1,P_2$ on a line $\ell$ in $AG(2,q)$, $P$ is
external or internal to the segment $P_1P_2$ depending on whether
\begin{equation}\label{exto}
(x-x_1)(x-x_2)\quad \text{is a non-zero square  or
a non-square in }\fq,
\end{equation}
 where $x$,
$x_1$ and $x_2$ are the coordinates of $P$, $P_1$ and $P_2$ with
respect to any affine frame of $\ell$. 
\begin{definition}\label{bico}
Let ${\mathcal A}$ be a complete arc in $AG(2,q)$. A point $P\in AG(2,q)\setminus {\mathcal A}$ is said to be bicovered by ${\mathcal A}$ if there exist $P_1,P_2,P_3,P_4\in {\mathcal A}$ such that
$P$ is both external to the segment $P_1P_2$ and internal to the segment $P_3P_4$. If every $P\in AG(2,q)\setminus {\mathcal A}$ is bicovered by ${\mathcal A}$, then ${\mathcal A}$ is said to be a bicovering arc. If there exists precisely one point $Q\in AG(2,q)\setminus {\mathcal A}$ which is not bicovered by ${\mathcal A}$, then ${\mathcal A}$ is said to be almost bicovering, and $Q$ is called the center of ${\mathcal A}$.
\end{definition}

A key tool in this paper is the following result from \cite{Gjac}.
\begin{proposition}\label{mainP}
Let $\tau$ be a non-square in $\fq$. 
If ${\mathcal A}$ is a bicovering $k$-arc, then $C_{\mathcal A}$ is a complete cap in $AG(N,q)$ of size $kq^{(N-2)/2}$. If ${\mathcal A}$ is almost bicovering with center $Q=(x_0,y_0)$, then either
$$C=C_{\mathcal A} \cup \{(\alpha,\alpha^2-\tau,x_0,y_0)\mid \alpha \in
\fqpr\}$$ 
or
$$C=C_{\mathcal A} \cup \{(\alpha,\alpha^2-\tau^2,x_0,y_0)\mid \alpha \in
\fqpr\}$$ 
is a complete cap in $AG(N,q)$ of  size $(k+1)q^{(N-2)/2}$. The former case occurs precisely when $Q$ is external to every secant of ${\mathcal A}$ through $Q$.
\end{proposition}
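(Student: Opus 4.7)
The plan is to check two things: that $C_{\mathcal A}$ (and the enlargement $C$) is a cap, and that every point outside lies on one of its secants. For the cap property, suppose three distinct points $P_i = (\alpha_i, \alpha_i^2, u_i, v_i) \in C_{\mathcal A}$ admit an $\fq$-affine dependence $\sum_i \lambda_i P_i = 0$ with $\sum_i \lambda_i = 0$ and not all $\lambda_i$ zero. Projecting onto the last two coordinates and using that ${\mathcal A}$ is an arc forces the three $(u_i, v_i)$ to coincide; the projection onto the first two coordinates then places $(\alpha_i, \alpha_i^2)$ on a common $\fq$-line, and an elementary computation using $\alpha_i^2 - \alpha_j^2 = (\alpha_i + \alpha_j)(\alpha_i - \alpha_j)$ forces two of the $\alpha_i$'s to coincide, a contradiction.

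For completeness, fix $P = (\beta, \gamma, u_0, v_0) \notin C_{\mathcal A}$ and seek $P_1, P_2 \in C_{\mathcal A}$ with $\lambda_1 + \lambda_2 = 1$ and $\lambda_1 P_1 + \lambda_2 P_2 = P$. The condition on the last two coordinates prescribes the pair $(\lambda_1, \lambda_2)$ once a secant of ${\mathcal A}$ through $(u_0, v_0)$ is chosen (or, if $(u_0, v_0) \in {\mathcal A}$, one sets $(u_1, v_1) = (u_2, v_2) = (u_0, v_0)$ with $\lambda_1 \in \fq$ free). The first two coordinate equations then collapse, via the identity
\[
\lambda_1 \alpha_1^2 + \lambda_2 \alpha_2^2 = (\lambda_1 \alpha_1 + \lambda_2 \alpha_2)^2 + \lambda_1 \lambda_2 (\alpha_1 - \alpha_2)^2
\]
valid when $\lambda_1 + \lambda_2 = 1$, to the single condition $(\alpha_1 - \alpha_2)^2 = (\gamma - \beta^2)/(\lambda_1 \lambda_2)$, solvable in $\fqpr$ with $\alpha_1 \ne \alpha_2$ iff the right-hand side is a nonzero square. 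The crucial arithmetic input is that $(N-2)/2$ is odd (since $N \equiv 0 \pmod 4$), so an element of $\fq^*$ is a square in $\fqpr^*$ iff it is a square in $\fq^*$. By Segre's dichotomy, $-\lambda_1 \lambda_2$ is a square in $\fq$ precisely when $(u_0, v_0)$ is external to the chosen secant, so solvability is dictated by matching the square status of $\beta^2 - \gamma$ in $\fqpr$ with the external/internal status of $(u_0, v_0)$; the bicovering hypothesis supplies both options for $(u_0, v_0) \notin {\mathcal A}$, and the case $(u_0, v_0) \in {\mathcal A}$ is handled by observing that $\lambda_1(1-\lambda_1)$ takes both square and non-square values as $\lambda_1$ varies over $\fq$.

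For the almost bicovering case, the preceding argument leaves uncovered exactly the points $P = (\beta, \gamma, x_0, y_0)$ for which $\beta^2 - \gamma$ sits in the single square class of $\fqpr^*$ corresponding to the missing external/internal parity at $Q$. One then verifies that adjoining the shifted parabola $\{(\alpha, \alpha^2 - \eta, x_0, y_0) \mid \alpha \in \fqpr\}$, with $\eta = \tau$ or $\eta = \tau^2$ chosen according to whether $Q$ is external or internal to all its secants, fills in the missing secants: the same quadratic analysis (with $\gamma$ replaced by $\gamma + \eta$) shows that every such $P$ lies on a chord of the shifted parabola, while the new points form a cap (same parabola argument) and cannot create collinearities with points of $C_{\mathcal A}$ because such a collinearity would force all three points to lie in the fibre over $(x_0, y_0)$, handled again by the parabola-no-three-collinear argument. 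The main obstacle is keeping the square/non-square bookkeeping across $\fq$ and $\fqpr$ coherent with the internal/external geometry, and in particular correctly matching the two sub-cases with $\tau$ versus $\tau^2$.
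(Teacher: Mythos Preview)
The paper does not prove this proposition; it is quoted as a result from \cite{Gjac}. Your outline is the natural direct argument and is largely correct, but there is one genuine gap.

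In the almost-bicovering case you assert that the added points ``cannot create collinearities with points of $C_{\mathcal A}$ because such a collinearity would force all three points to lie in the fibre over $(x_0,y_0)$''. That claim is false when exactly one of the three points is new. Take any secant of $\mathcal A$ through $Q$, say $Q=\mu_2(u_2,v_2)+\mu_3(u_3,v_3)$ with $\mu_2+\mu_3=1$ and $(u_2,v_2)\neq(u_3,v_3)$ in $\mathcal A$. Then
\[
(\alpha_1,\alpha_1^2-\eta,x_0,y_0),\qquad (\alpha_2,\alpha_2^2,u_2,v_2),\qquad (\alpha_3,\alpha_3^2,u_3,v_3)
\]
are collinear over $\fq$ precisely when $\alpha_1=\mu_2\alpha_2+\mu_3\alpha_3$ and $(\alpha_2-\alpha_3)^2=\eta/(-\mu_2\mu_3)$. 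Since $Q$ is external (resp.\ internal) to every secant of $\mathcal A$ through $Q$, the quantity $-\mu_2\mu_3$ is always a nonzero square (resp.\ nonsquare) in $\fq$, hence in $\fqpr$ by your parity observation. Thus this equation has a solution with $\alpha_2\neq\alpha_3$ if and only if $\eta$ is a square (resp.\ a nonsquare) in $\fqpr$. It is therefore the \emph{cap} property of $C$, not its completeness, that forces the choice $\eta=\tau$ in the external case and $\eta=\tau^2$ in the internal case. Your completeness argument for the shifted parabola, on the other hand, works for any nonzero $\eta$, because the free parameter $\lambda_1\in\fq$ lets $\lambda_1(1-\lambda_1)$ realise either square class. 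You have identified the right dichotomy, but attributed it to the wrong half of the verification; as written, the proof that $C$ is a cap is incomplete.
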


\section{A family of curves defined over $\fq$}\label{secfam}

Throughout this section $q=p^h$ for some prime $p>3$, and $m$ is a proper divisor of $q+1$ with $(m,6)=1$. Also, $\bar t$ is a non-zero element in $\fqs$ which is not an $m$-th power in $\fqs$. Let $A,B\in \fqs$ with $AB\neq (A-1)^3$. An important role for the present investigation is played by the curve
\begin{equation}\label{curva2bis}
\cC_{A,B,\bar t,m}: f_{A,B,\bar t,m}(X,Y)=0, 
\end{equation}
where
\begin{equation}\label{curva2}
\begin{array}{rcl}
f_{A,B,\bar t,m}(X,Y)& = &A(\bar t^3X^{2m}Y^m+\bar t^3X^mY^{2m}-3\bar t^2X^{m}Y^m+1) 
-B\bar t^2X^mY^m\\ & & -\bar t^4X^{2m}Y^{2m}+3\bar t^2X^mY^m-\bar tX^m-\bar tY^m.
\end{array}
\end{equation}

The curve $\cC_{A,B,\bar t,m}$ was thoroughly investigated in \cite{ABGPnodal}.

\begin{proposition}[Case 2 of Proposition 9 in \cite{ABGPnodal}]\label{P26apr13}
Let $A,B$ be such that
\begin{itemize}
\item $AB\neq (A-1)^3$;
\item $A\neq 0$;
\item  either $A^3\neq -1$ or $B\neq 1-(A-1)^3$.
\end{itemize}
Then the curve $\cC_{A,B,\bar t,m}$ is absolutely irreducible of genus $g\le 3m^2-3m+1$.
\end{proposition}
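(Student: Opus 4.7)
The plan is to reduce to a curve of degree $4$ via the substitution $U=\bar t X^m$, $V=\bar t Y^m$, and then transfer the irreducibility and genus bound to $\cC_{A,B,\bar t,m}$ by interpreting $(X,Y)\mapsto(U,V)$ as a tower of two Kummer covers of degree $m$. The substitution rewrites $f_{A,B,\bar t,m}(X,Y)$ as
$$\tilde f(U,V)=-U^2V^2+A(U^2V+UV^2)+(3-3A-B)UV-U-V+A,$$
a symmetric polynomial of total degree $4$; let $\tilde\cC$ denote its projective closure.

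First I would prove that $\tilde\cC$ is absolutely irreducible of geometric genus at most $1$. Irreducibility is checked by ruling out each possible factorization $\tilde f=g_1g_2$ with $\deg g_i\le 3$ via undetermined coefficients; the three hypotheses on $(A,B)$ are precisely what excludes the three resulting degenerate systems. For the genus, the arithmetic genus of a plane quartic is $3$. The curve meets $\{U=0\}$ only at $P_1=(0,A)$, which is smooth on $\tilde\cC$ because a direct computation gives $\partial_U\tilde f|_{P_1}=(A-1)^3-AB\neq 0$; by the symmetry $\tilde f(U,V)=\tilde f(V,U)$ the same holds at $P_2=(A,0)$. At the two points at infinity $[1{:}0{:}0]$ and $[0{:}1{:}0]$ the local tangent cones have the form $-u(u-AW)$ (using $A\neq 0$), so each is a node. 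These two nodes drop the arithmetic genus by $2$, giving $g(\tilde\cC)\le 1$.

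Next I would realize $\K(\cC_{A,B,\bar t,m})$ as the tower $F_0=\K(\tilde\cC)\subset F_1=F_0(X)\subset F_2=F_1(Y)$ with $X^m=U/\bar t$ and $Y^m=V/\bar t$, and apply Proposition \ref{teo1} to each step. Absolute irreducibility at each step is immediate from the simple zero of $U$ at $P_1$ in $F_0$ (and of $V$ at any preimage of $P_2$ in $F_1$, since $P_2$ is unramified in $F_1/F_0$). For the genus, expanding each node branch in a uniformizer shows that $U$ has exactly four places of nontrivial valuation on $\tilde\cC$—the affine zero $P_1$, one branch of $[0{:}1{:}0]$ carrying a simple zero, and both branches of $[1{:}0{:}0]$ carrying simple poles—each giving $r_\gamma=1$. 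Proposition \ref{teo1}(ii) then yields
$$g_1=1+m(g_0-1)+\tfrac{m}{2}\cdot 4\left(1-\tfrac{1}{m}\right)=mg_0+m-1.$$
An analogous analysis for $V$ shows that only the two places of $F_0$ unramified in $F_1/F_0$ (namely $P_2$ and one branch of $[0{:}1{:}0]$) lift to ramified places in $F_2/F_1$, producing $2m$ places of $F_1$ with $r=1$; the other two places of nontrivial valuation of $V$ are totally ramified already in $F_1/F_0$, so their unique preimages satisfy $v(V)\equiv 0\pmod m$. A second application gives $g_2\le m g_1+m^2-2m+1$, and the worst case $g_0=1$ yields $g_2\le 3m^2-3m+1$.

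The main obstacle is the bookkeeping in the irreducibility step: matching each of the three listed hypotheses to the correct degenerate configuration (a linear factor, a quadratic factor, or an extra affine singularity beyond the two nodes at infinity) is what forces the statement to carry such an ad hoc list of exceptions, and the same singularity analysis is needed to certify that no unexpected singular point alters the ramification counts in the genus computation.
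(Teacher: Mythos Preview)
Your approach is correct and essentially coincides with the one underlying the cited result. Note that the present paper does not give its own proof of this proposition---it is quoted from \cite{ABGPnodal}---but the auxiliary facts quoted alongside it (Propositions \ref{clear} and \ref{clear2}) reveal that the argument there proceeds exactly as you outline: pass to the base curve in the variables $\bar u=\bar x^m$, $\bar z=\bar y^m$ (your $U,V$ up to the harmless rescaling by $\bar t$), identify the six distinguished places $\gamma_1,\dots,\gamma_6$, show each has ramification index $m$ in $\K(\bar x,\bar y)/\K(\bar u,\bar z)$, and apply Riemann--Hurwitz. Your computation of the divisors of $U$ and $V$ on the quartic, and the resulting ramification bookkeeping in the two-step tower, reproduce precisely Propositions \ref{clear} and \ref{clear2}, and your final count $g_2\le 3m^2-3m+1$ matches.

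Two small remarks on the write-up. First, the three hypotheses do not all correspond to excluding factorizations of $\tilde f$: the conditions $AB\neq(A-1)^3$ and $(A^3,B)\neq(-1,\,1-(A-1)^3)$ are genuinely about irreducibility of the quartic (cf.\ the explicit factorizations exhibited in Proposition \ref{fuori}), but $A\neq 0$ is what forces the tangent cones $-u(u-Aw)$ at the two infinite points to split, i.e.\ it guarantees nodes rather than cusps---which is exactly what you need for your branch-by-branch valuation analysis. Second, your smoothness check at $P_1=(0,A)$ via $\partial_U\tilde f$ is sufficient but stronger than necessary: in fact $\partial_V\tilde f|_{P_1}=-1\neq 0$ always, so $P_1$ is smooth unconditionally. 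Neither point affects the validity of your argument.
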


Under the assumptions of Proposition \ref{P26apr13},
let $\bar x$ and $\bar y$ denote the rational functions associated to the affine coordinates $X$ and $Y$, respectively. Then $\K(\cC_{A,B,\bar t,m})=\K(\bar{x},\bar{y})$ with $f_{A,B,\bar t,m}(\bar x,\bar y)=0$. Let $\bar u=\bar x^m$ and $\bar z=\bar y^m$.
The following results from \cite{ABGPnodal} about the function field extension $\K(\bar x,\bar y):\K(\bar u,\bar z)$ will be needed.

\begin{proposition}[Lemma 4 in \cite{ABGPnodal}]\label{clear} In the function field $\K(\bar u,\bar z)$, there exist six places $\gamma_j$, $j=1,\ldots,6$, such that
$$
\div(\bar u)=\gamma_4+\gamma_5-\gamma_1-\gamma_2, \qquad
\div(\bar z)=\gamma_2+\gamma_6-\gamma_3-\gamma_4.
$$
\end{proposition}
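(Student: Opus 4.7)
My plan is to locate the six places directly on the biprojective closure of the plane curve
\begin{equation*}
g(U,Z) = -\bar t^4 U^2Z^2 + A\bar t^3(U^2Z + UZ^2) + (3-3A-B)\bar t^2 UZ - \bar t(U+Z) + A = 0,
\end{equation*}
obtained by substituting $U=\bar x^m$, $Z=\bar y^m$ in the defining equation $f_{A,B,\bar t,m}(\bar x,\bar y)=0$. Since $f_{A,B,\bar t,m}(X,Y)=g(X^m,Y^m)$, irreducibility of $f_{A,B,\bar t,m}$ (Proposition~\ref{P26apr13}) forces irreducibility of $g$, so $\K(\bar u,\bar z)$ is genuinely a function field. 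Moreover $g$ has degree exactly $2$ in $Z$, so $[\K(\bar u,\bar z):\K(\bar u)]=2$ and $\bar u$ has function-field degree $2$: its principal divisor has exactly two zeros and two poles counted with multiplicity.

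To locate the zeros of $\bar u$ I substitute $U=0$ into $g$, which collapses to the linear equation $A-\bar t Z = 0$; the unique affine zero is the point $P_5=(0,A/\bar t)$, which is smooth because the Jacobian $(\partial_U g,\partial_Z g)$ at $P_5$ equals $\bigl(\bar t((A-1)^3-AB),\,-\bar t\bigr)$, both entries nonzero by hypothesis. Call the associated place $\gamma_5$; then $v_{\gamma_5}(\bar u)=1$. The remaining zero must sit at infinity in the $Z$-direction: passing to the chart $\xi = 1/Z$ and multiplying $g$ by $\xi^2$, the resulting local equation at $(U,\xi)=(0,0)$ has Jacobian $(A\bar t^3,-\bar t)$, again smooth, giving a place $\gamma_4$ with $v_{\gamma_4}(\bar u)=1$ and $v_{\gamma_4}(\bar z)=-1$. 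For the poles of $\bar u$ I use the chart $\xi = 1/U$: multiplying $g$ by $\xi^2$ and setting $\xi = 0$ produces the factorization $\bar t^3 Z(A-\bar t Z)$, so there are precisely two branches at $U=\infty$, namely $Z=0$ (place $\gamma_2$, also a simple zero of $\bar z$) and $Z=A/\bar t$ (place $\gamma_1$, at which $\bar z$ is finite and nonzero). Analogous Jacobian computations, invoking $A\neq 0$ and $(A-1)^3\neq AB$, show smoothness at both, yielding $v_{\gamma_1}(\bar u)=v_{\gamma_2}(\bar u)=-1$. Combining these gives $\div(\bar u)=\gamma_4+\gamma_5-\gamma_1-\gamma_2$.

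The divisor of $\bar z$ then follows for free from the involution $\sigma: U\leftrightarrow Z$, which preserves $g$ and swaps $\bar u$ with $\bar z$; it acts on the already-identified places by $\sigma(\gamma_1)=\gamma_3=(A/\bar t,\infty)$, $\sigma(\gamma_2)=\gamma_4$, $\sigma(\gamma_5)=\gamma_6=(A/\bar t,0)$. Applying $\sigma$ to $\div(\bar u)$ therefore yields $\div(\bar z)=\gamma_2+\gamma_6-\gamma_3-\gamma_4$, and the six places are pairwise distinct because they are separated by the value of $(\bar u,\bar z)\in(\K\cup\{\infty\})^2$.

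The main technical obstacle is the local analysis at the four places at infinity $\gamma_1,\gamma_2,\gamma_3,\gamma_4$: one must select the correct affine chart on the biprojective model of $\{g=0\}$, Taylor-expand the defining equation, and verify that the Jacobian is nonzero so that each branch corresponds to a single unramified simple place rather than a singularity hiding several branches or a higher ramification. The hypothesis $(A-1)^3\neq AB$ is precisely what prevents degeneration at $\gamma_1$ and $\gamma_3$, while $A\neq 0$ plays the analogous role at the remaining four points.
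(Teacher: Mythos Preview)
The paper does not prove this statement; it is quoted verbatim as Lemma~4 of \cite{ABGPnodal} and no argument is supplied here. So there is nothing in the present paper to compare your proof against.

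That said, your argument is correct and self-contained. The reduction to the biquadratic curve $g(U,Z)=0$, the degree count giving $[\K(\bar u,\bar z):\K(\bar u)]=2$, the chart-by-chart identification of the six branches on the biprojective model, and the use of the $U\leftrightarrow Z$ involution to transport $\div(\bar u)$ to $\div(\bar z)$ all go through as you describe. The distinctness of $\gamma_1,\ldots,\gamma_6$ follows, as you say, from their pairwise distinct images in $(\K\cup\{\infty\})^2$, which uses $A\neq 0$.

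One small inaccuracy in your closing commentary: at $\gamma_1$ (and symmetrically $\gamma_3$) the Jacobian entry $\partial_Z=-A\bar t^3$ is already nonzero by $A\neq 0$, so smoothness there does not actually require $(A-1)^3\neq AB$; likewise at $\gamma_5,\gamma_6$ the entry $-\bar t$ is nonzero regardless of $A$. The hypothesis $AB\neq(A-1)^3$ enters only through your appeal to Proposition~\ref{P26apr13} for irreducibility of $f_{A,B,\bar t,m}$, hence of $g$. This does not affect the validity of the proof, only the attribution of which hypothesis is doing what.
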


\begin{proposition}[Case 2 of Proposition 9 in \cite{ABGPnodal}]\label{clear2}
For each $j=1,\ldots, 6$, the ramification index of $\gamma_j$ in the extension $\K(\bar x,\bar y)$ over $\K(\bar u,\bar z)$ is equal to $m$, and no other place of $\K(\bar u,\bar z)$ is ramified.
\end{proposition}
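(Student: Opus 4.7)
The plan is to factor the extension $\K(\bar x,\bar y):\K(\bar u,\bar z)$ as a tower of two degree-$m$ Kummer extensions and apply Proposition \ref{teo1} twice. Specifically, I would set $F_0=\K(\bar u,\bar z)$, $F_1=F_0(\bar x)=\K(\bar x,\bar z)$ (where $\bar x^m=\bar u$), and $F_2=F_1(\bar y)=\K(\bar x,\bar y)$ (where $\bar y^m=\bar z$). Each step is a Kummer extension with the exponent $m$ coprime to the characteristic $p>3$. Multiplicativity of ramification indices in towers then reduces the problem to two applications of Proposition \ref{teo1}(i).

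For the first step, Proposition \ref{clear} gives $\div(\bar u)=\gamma_4+\gamma_5-\gamma_1-\gamma_2$. Hence $v_{\gamma_j}(\bar u)=\pm 1$ for $j\in\{1,2,4,5\}$, so $r_{\gamma_j}=(m,\pm 1)=1$ and the places $\gamma_1,\gamma_2,\gamma_4,\gamma_5$ are totally ramified of index $m$ in $F_1/F_0$. For $j\in\{3,6\}$ we have $v_{\gamma_j}(\bar u)=0$, giving $r_{\gamma_j}=m$ and hence no ramification in $F_1/F_0$; the same holds for every place not in $\{\gamma_1,\dots,\gamma_6\}$, since such a place has $v(\bar u)=0$. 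For the second step, for each $j$ I would pick a place $\tilde\gamma_j$ of $F_1$ above $\gamma_j$ and compute $v_{\tilde\gamma_j}(\bar z)=e(\tilde\gamma_j|\gamma_j)\,v_{\gamma_j}(\bar z)$ using $\div(\bar z)=\gamma_2+\gamma_6-\gamma_3-\gamma_4$. A case check gives:
\begin{itemize}
\item $j\in\{1,5\}$: $e(\tilde\gamma_j|\gamma_j)=m$, $v_{\gamma_j}(\bar z)=0$, so $v_{\tilde\gamma_j}(\bar z)=0$, hence $F_2/F_1$ is unramified at $\tilde\gamma_j$; total index $m\cdot 1=m$.
\item $j\in\{2,4\}$: $e(\tilde\gamma_j|\gamma_j)=m$, $v_{\gamma_j}(\bar z)=\pm 1$, so $v_{\tilde\gamma_j}(\bar z)=\pm m$ and $r_{\tilde\gamma_j}=m$, hence unramified in $F_2/F_1$; total $m\cdot 1=m$.
\item $j\in\{3,6\}$: $e(\tilde\gamma_j|\gamma_j)=1$, $v_{\gamma_j}(\bar z)=\pm 1$, so $v_{\tilde\gamma_j}(\bar z)=\pm 1$ and $r_{\tilde\gamma_j}=1$, hence totally ramified of index $m$ in $F_2/F_1$; total $1\cdot m=m$.
\end{itemize}
For any other place $\gamma$ of $F_0$ we have $v_\gamma(\bar u)=v_\gamma(\bar z)=0$, so neither step contributes ramification, and the extension $F_2/F_0$ is unramified there.

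The main obstacle I expect is verifying the hypothesis of Proposition \ref{teo1} at each step, namely that $\bar u$ is not an $e$-th power in $F_0$ and $\bar z$ is not an $e$-th power in $F_1$ for any divisor $e>1$ of $m$. For $\bar u$ this follows at once from $\div(\bar u)=\gamma_4+\gamma_5-\gamma_1-\gamma_2$: an $e$-th power would have principal divisor divisible by $e$, but the coefficient $\pm 1$ at $\gamma_1$ (say) is not divisible by any $e>1$, so $[F_1:F_0]=m$. For $\bar z$ in $F_1$ the same argument applies: the valuation of $\bar z$ at any place above $\gamma_3$ is $\pm 1$, which rules out $\bar z$ being an $e$-th power in $F_1$ for $e>1$ dividing $m$, so $[F_2:F_1]=m$. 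With both Kummer hypotheses in place, the tower has total degree $m^2$, matching the generic $m^2$-to-$1$ map $(X,Y)\mapsto(X^m,Y^m)$ used to define the curve, and the proposition follows.
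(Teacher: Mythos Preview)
Your argument is correct. The paper itself does not supply a proof of this proposition; it is quoted verbatim as ``Case 2 of Proposition 9'' from the companion paper \cite{ABGPnodal}, so there is no in-text proof to compare against. Your tower decomposition $F_0=\K(\bar u,\bar z)\subset F_1=\K(\bar x,\bar z)\subset F_2=\K(\bar x,\bar y)$ together with two applications of Proposition~\ref{teo1}(i) is exactly the natural route, and your case analysis of the valuations is accurate. Two small remarks that tighten the write-up: first, the hypothesis $(m,p)=1$ needed for Proposition~\ref{teo1} holds because $m\mid q+1$ and $p\mid q$; second, for $j\in\{3,6\}$ there are in general several places of $F_1$ lying over $\gamma_j$, not just the chosen $\tilde\gamma_j$, but Proposition~\ref{teo1}(i) shows the ramification index in $F_2/F_1$ depends only on $v_{\tilde\gamma_j}(\bar z)$, which is the same for all of them since $F_1/F_0$ is unramified there. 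With these points made explicit, the proof is complete.
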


According to  \cite{ABGPnodal}, 
for $j=1,\ldots,6$, let  ${\bar {\bar{\gamma}}}_j^1, \ldots,{\bar {\bar{\gamma}}}_j^m$ denote the  places of $\K(\bar x, \bar y)$ lying over the place $\gamma_j$ of $\K(\bar u, \bar z)$.

\begin{proposition}[Case 2 of Proposition 9 in \cite{ABGPnodal}]\label{clear3}
In $\K(\bar x,\bar y)$,
\begin{equation}\label{26apr}
\div\big((A-\bar t\bar x^m)(A-\bar t\bar y^m)\big)=m\Big(\sum_{i=1}^m({\bar{\bar {\gamma}}}_5^i+{\bar{\bar {\gamma}}}_6^i-{\bar{\bar {\gamma}}}_4^i-{\bar{\bar {\gamma}}}_2^i)\Big).
\end{equation}
\end{proposition}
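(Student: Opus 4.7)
The plan is to determine $\div\bigl((A-\bar t\bar u)(A-\bar t\bar z)\bigr)$ first inside the base function field $\K(\bar u,\bar z)$, and then lift the answer to $\K(\bar x,\bar y)$ via Proposition \ref{clear2}.

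I treat the two factors separately. The poles of $A-\bar t\bar u$ coincide with those of $\bar u$, which by Proposition \ref{clear} are simple poles at $\gamma_1,\gamma_2$; hence the zero divisor has degree $2$ and two zeros must be located. Since \eqref{curva2} is a polynomial in $X^m,Y^m$, the relation $f_{A,B,\bar t,m}(\bar x,\bar y)=0$ descends to a polynomial identity $\tilde f(\bar u,\bar z)=0$; substituting $u=A/\bar t$ yields
\[\tilde f(A/\bar t,z)=\bar t z\bigl((A-1)^3-AB\bigr),\]
which, by the hypothesis $(A-1)^3\neq AB$, vanishes only for $z=0$. The unique affine candidate is the point $(A/\bar t,0)=\gamma_6$; since $\partial_u\tilde f(A/\bar t,0)=-\bar t\neq0$ the point is smooth and $\bar z$ is a uniformiser there, so $v_{\gamma_6}(A-\bar t\bar u)=1$. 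The remaining zero has to lie at infinity: retaining only the dominant terms of $\tilde f(\bar u,\bar z)=0$ as $\bar z\to\infty$ with $\bar u$ bounded (i.e., dividing by $\bar z^{2}$ and letting $\bar z^{-1}\to 0$) one obtains $\bar t^{3}\bar u(A-\bar t\bar u)=0$, so at every pole of $\bar z$ either $\bar u=0$ or $\bar u=A/\bar t$ holds. The poles of $\bar z$ are $\gamma_3,\gamma_4$ (Proposition \ref{clear}); since $\gamma_4$ is already a zero of $\bar u$ while $\gamma_3$ is not, we must have $\bar u(\gamma_3)=A/\bar t$, and degree balance forces this zero to be simple. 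Therefore
\[\div(A-\bar t\bar u)=\gamma_3+\gamma_6-\gamma_1-\gamma_2,\]
and, by the $X\leftrightarrow Y$ symmetry of $f_{A,B,\bar t,m}$,
\[\div(A-\bar t\bar z)=\gamma_1+\gamma_5-\gamma_3-\gamma_4.\]

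Adding the two identities cancels $\gamma_1$ and $\gamma_3$ and produces in $\K(\bar u,\bar z)$
\[\div\bigl((A-\bar t\bar u)(A-\bar t\bar z)\bigr)=\gamma_5+\gamma_6-\gamma_2-\gamma_4.\]
To finish, Proposition \ref{clear2} implies that above each $\gamma_j$ lie exactly $m$ places $\bar{\bar\gamma}_j^1,\ldots,\bar{\bar\gamma}_j^m$ of $\K(\bar x,\bar y)$, all with ramification index $m$, so for every $h\in\K(\bar u,\bar z)$ and all $i,j$ one has $v_{\bar{\bar\gamma}_j^i}(h)=m\,v_{\gamma_j}(h)$. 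Applying this to $h=(A-\bar t\bar u)(A-\bar t\bar z)=(A-\bar t\bar x^m)(A-\bar t\bar y^m)$ reproduces exactly \eqref{26apr}.

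The main obstacle is pinpointing the second zero of $A-\bar t\bar u$: the affine computation $\tilde f(A/\bar t,z)=\bar t z((A-1)^3-AB)$ only exhibits $\gamma_6$, and a hasty degree count might wrongly assign a double zero there. Extracting the missing zero from the leading-order behaviour of $\tilde f(\bar u,\bar z)=0$ at a pole of $\bar z$, and combining it with the value assignments forced by Proposition \ref{clear}, is exactly what places the second simple zero at $\gamma_3$.
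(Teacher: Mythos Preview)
The paper does not prove this statement; it is quoted verbatim from \cite{ABGPnodal} (Case~2 of Proposition~9 there) without argument. Your approach---computing the divisor of $(A-\bar t\bar u)(A-\bar t\bar z)$ in $\K(\bar u,\bar z)$ and then lifting via Proposition~\ref{clear2}---is the natural one and is correct. The substitution $\tilde f(A/\bar t,z)=\bar t z\bigl((A-1)^3-AB\bigr)$ is accurate, the leading-order analysis at the poles of $\bar z$ correctly pins the second zero at $\gamma_3$, and the symmetry $\gamma_1\leftrightarrow\gamma_3$, $\gamma_2\leftrightarrow\gamma_4$, $\gamma_5\leftrightarrow\gamma_6$ induced by $X\leftrightarrow Y$ gives the companion formula for $A-\bar t\bar z$.

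One small slip: from $\partial_u\tilde f(A/\bar t,0)\neq 0$ you correctly conclude that $\bar z$ is a uniformiser at $\gamma_6$, but this alone does not yield $v_{\gamma_6}(A-\bar t\bar u)=1$. What you need is $\partial_z\tilde f(A/\bar t,0)=\bar t\bigl((A-1)^3-AB\bigr)\neq 0$, so that $\bar u-A/\bar t$ is itself a uniformiser; equivalently, $z=0$ is a \emph{simple} root of $\tilde f(A/\bar t,z)$, which your own computation already shows. In any case the degree-balance argument you invoke for $\gamma_3$ (the zero divisor has degree~$2$ and two distinct zeros have been exhibited) forces both valuations to equal~$1$, so the conclusion is unaffected.
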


In order to investigate the bicovering properties of a coset of index $m$ in the abelian group of the non-singular $\fq$-rational points of a cubic with an isolated double point
we need to establish whether 
$$\frac{(A-\bar t\bar x^m)(A-\bar t\bar y^m)}{(1-\bar t\bar x^m)(1-\bar t\bar y^m)}$$
is a square in $\K(\bar x, \bar y)$.

\begin{proposition}\label{exiisolated} Assume that $A$ and $B$ satisfy the conditions of Proposition {\rm{\ref{P26apr13}}}. 
For $d\in \K$, $d\neq 0$, let
$$
\eta=d\frac{(A-\bar t\bar x^m)(A-\bar t\bar y^m)}{(1-\bar t\bar x^m)(1-\bar t\bar y^m)}.
$$

If $A\neq 1$, then
\begin{itemize}

\item [{\rm{(i)}}] the divisor of $\eta$ is
$$
m\sum_{i=1}^m({\bar{\bar \gamma}}_5^i+{\bar{\bar \gamma}}_6^i+{\bar{\bar \gamma}}_1^i+{\bar{\bar \gamma}}_3^i)-{\bar{\bar{D}}},
$$
where ${\bar{\bar{D}}}$ is a divisor of degree $4m^2$ whose support consists of places not lying over  
any place in $\{\gamma_j\mid j=1,\ldots,6\}$;

\item [{\rm{(ii)}}] the function field $\K(\bar x,\bar y,\bar w)$ with $\bar w^2=\eta$ is a Kummer extension of $\K(\bar x,\bar y)$;

\item [{\rm{(iii)}}]
the genus of the function field $\K(\bar x,\bar y,\bar w)$  is less than or equal to $8m^2-4m+1$.
\end{itemize}
\end{proposition}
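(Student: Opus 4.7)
The plan is to establish the divisor identity in (i) first, and then deduce (ii) and (iii) by the standard Kummer-extension machinery (Propositions \ref{teo1} and \ref{teo1cor}). Proposition \ref{clear3} already supplies the divisor of the numerator $(A-\bar t\bar x^m)(A-\bar t\bar y^m)$, so the main task in (i) is to compute the divisor of the denominator $(1-\bar t\bar x^m)(1-\bar t\bar y^m)$ and to verify that its support is disjoint from $\{\bar{\bar\gamma}_j^i\}$.

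For (i), I would work first in the intermediate function field $\K(\bar u,\bar z)$ with $\bar u=\bar x^m$ and $\bar z=\bar y^m$. By Proposition \ref{clear}, the pole divisors of $1-\bar t\bar u$ and $1-\bar t\bar z$ in $\K(\bar u,\bar z)$ are $\gamma_1+\gamma_2$ and $\gamma_3+\gamma_4$ respectively, so by the degree-zero relation each of these functions has exactly two zeros counted with multiplicity. Reading Proposition \ref{clear3} backward and comparing with Proposition \ref{clear}, the cancellations producing the divisor $\gamma_5+\gamma_6-\gamma_2-\gamma_4$ on $\K(\bar u,\bar z)$ force $\bar u=A/\bar t$ at $\gamma_3,\gamma_6$ and $\bar z=A/\bar t$ at $\gamma_1,\gamma_5$. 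Since $A\neq 1$ by hypothesis, none of these values equals $1/\bar t$, and so the four zeros of $(1-\bar t\bar u)(1-\bar t\bar z)$ lie at places $\alpha_1,\alpha_2,\beta_1,\beta_2$ distinct from $\gamma_1,\ldots,\gamma_6$. Pulling back to $\K(\bar x,\bar y)$ via Proposition \ref{clear2}, each $\gamma_j$ contributes $m\sum_i\bar{\bar\gamma}_j^i$, while each of the four new places is unramified and contributes an unramified divisor of degree $m^2$. Subtracting this from the divisor of the numerator given by Proposition \ref{clear3} yields the formula in (i), with $\bar{\bar D}$ of degree $4m^2$ supported outside the $\bar{\bar\gamma}_j^i$.

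Parts (ii) and (iii) are then routine. Since $(m,6)=1$ the integer $m$ is odd, so $v_{\bar{\bar\gamma}_1^i}(\eta)=m$ is odd; hence $\eta$ is not a square in $\K(\bar x,\bar y)$, which proves (ii) by Proposition \ref{teo1cor}. For (iii), I would count the places of odd $\eta$-valuation: the $4m$ places $\bar{\bar\gamma}_j^i$ with $j\in\{1,3,5,6\}$ contribute valuation $m$, and at most $4m^2$ further places in the support of $\bar{\bar D}$ contribute $-1$, so the total count $M$ satisfies $M\le 4m+4m^2$. Plugging into $g'=2g-1+M/2$ together with the bound $g\le 3m^2-3m+1$ from Proposition \ref{P26apr13} yields $g'\le 8m^2-4m+1$.

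The main technical subtlety is the claim that $\bar u=A/\bar t$ at $\gamma_3,\gamma_6$ and $\bar z=A/\bar t$ at $\gamma_1,\gamma_5$: this is not stated explicitly in Proposition \ref{clear3} but is forced by reading that divisor identity together with the pole structure supplied by Proposition \ref{clear}. Once this is recognised, the role of the hypothesis $A\neq 1$ -- it cleanly separates the $\pm 1$ valuations produced by the new zeros of the denominator from the $\pm m$ valuations coming from the ramified places $\bar{\bar\gamma}_j^i$ -- becomes transparent, and the divisor and genus bookkeeping go through directly.
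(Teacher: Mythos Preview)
Your proposal is correct and follows essentially the same route as the paper: compute the divisor of the denominator in $\K(\bar u,\bar z)$, lift via Proposition~\ref{clear2}, subtract from the numerator divisor in Proposition~\ref{clear3}, and then read off (ii) and (iii) from the Kummer machinery. Your treatment is in fact more explicit than the paper's in two places---the paper merely writes ``from $A\neq 1$ it is easy to deduce'' where you reverse-engineer the values $\bar u(\gamma_3)=\bar u(\gamma_6)=A/\bar t$ and $\bar z(\gamma_1)=\bar z(\gamma_5)=A/\bar t$ from \eqref{26apr}, and the paper simply invokes Proposition~\ref{teo1} for (iii) where you spell out the count $M\le 4m+4m^2$ and the arithmetic $2(3m^2-3m+1)-1+\tfrac{1}{2}(4m^2+4m)=8m^2-4m+1$---but the underlying argument is the same.
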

\begin{proof}
By Propositions \ref{clear}, from $A\neq 1$ it is easy to deduce that
%
%
  the divisor of $1- \bar t \bar u$ in $\K(\bar u,\bar z)$ is
$$
-\gamma_1-\gamma_2+ D_1,
$$
where $D_1$ is the degree-$2$ divisor of the zeros of  $1- \bar t \bar u$.
Similarly,
$$
\div(1- \bar t \bar z)=-\gamma_3-\gamma_4+ D_2,
$$
and hence
in $\K(\bar u,\bar z)$ we have
$$
\div((1- \bar t\bar u)(1- \bar t\bar z))=-\gamma_1-\gamma_2-\gamma_3-\gamma_4+D,
$$
where $D$ is a divisor of degree $4$ whose support is disjoint from 
$\{\gamma_i\mid i=1,\ldots,6\}$.
Therefore, by Proposition \ref{clear2},
$$
\div((1- \bar t \bar x^m)( 1- \bar t \bar y^m))=m\sum_{i=1}^m(-{\bar{\bar \gamma}}_1^i-{\bar{\bar \gamma}}_2^i-{\bar{\bar \gamma}}_3^i-{\bar{\bar \gamma}}_4^i)+{\bar{\bar{D}}},
$$ 
where ${\bar{\bar{D}}}$ is a divisor of degree $4m^2$ whose support is disjoint from the set of places 
lying over $\{\gamma_i\mid i=1,\ldots,6\}$.
Then by Proposition \ref{clear3} the divisor of $\eta$ is
$$
m\sum_{i=1}^m({\bar{\bar \gamma}}_5^i+{\bar{\bar \gamma}}_6^i+{\bar{\bar \gamma}}_1^i+{\bar{\bar \gamma}}_3^i)-{\bar{\bar{D}}}.
$$
This proves (i). As $\eta$ is not a square in $\K(\bar x, \bar y)$, assertion (ii) holds as well. Finally, Proposition \ref{teo1} yields (iii).
\end{proof}

\section{Covering properties of certain subsets of $\cX$}\label{covvi} 

Throughout this section we fix an element $\beta$ in $\fqs\setminus \fq$ such that $\beta^2\in \fq$.
%
%
%
Let $\cX$ be the plane cubic with equation
$$
Y(X^2-\beta^2)=1.
$$
The point $Y_\infty$ is an isolated double point with tangents $X=\pm \beta$, and $X_\infty$ is an inflection point with tangent $Y=0$.
We choose $X_\infty$ as the neutral element of the abelian group
$(\cX\setminus \{Y_\infty\},\oplus)$ of the non-singular points of $\cX$.



For $v \in \K \setminus \{0,1\}$, let
$Q_v$ be the point on $\cX$ with affine coordinates $\big(\frac{v+1}{v-1}\beta, \frac{(v-1)^2}{4v\beta^2}\big)$. Also, let $Q_0=Y_\infty$ and $Q_1=X_\infty$.
Such a parametrization actually defines an isomorphism between  
$(\cX\setminus \{Y_\infty\},\oplus)$ and the multiplicative group of $\K$. In fact, it is straightforward to check that for $v,w \in \K^*$, 
\begin{equation}\label{8feb}
Q_v\oplus Q_w=Q_{vw}.
\end{equation}

The $(q+1)$ non-singular $\fq$-rational points of $\cX$ form a cyclic subgroup $G$
 of $(\cX\setminus \{Y_\infty\},\oplus)$. It is easily seen that 
$$
G=\{Q_{\frac{u+\beta}{u-\beta}}\mid u \in \fq \} \cup \{X_\infty\}.
$$
For a divisor $m$ of $q+1$, the group $G$ has precisely one subgroup $K$ of index $m$, consisting of the $m$-th powers in $G$. By \eqref{8feb},
$$
K=\big\{Q_{(\frac{u+\beta}{u-\beta})^m}\mid u \in \fq \big\} \cup \{X_\infty\}.
$$
Let $T=Q_{\bar t}$ be a point in $G\setminus K$ and let $K_T$ be the coset $K\oplus T$. 
Then
\begin{equation}\label{descri}
K_T=\big\{Q_{\bar t (\frac{u+\beta}{u-\beta})^m}\mid u \in \fq\big\} \cup \{Q_{\bar t}\}.
\end{equation}

Throughout this section $a,b$ are elements in $\fq$ with $b(a^2-\beta^2)\neq 1$, and $P$ is the point in $AG(2,q)\setminus \cX$ with affine coordinates $(a,b)$. We also assume that $(m,6)=1$.
Let
$$ 
  g_{a,b}(X,Y):=bX^2Y^2 - (b\beta^2+1) (X^2+Y^2) 
        - XY +a(X + Y) + \beta^2(b\beta^2 +1),
$$
and
$$
L_{a,b,\bar t, m}(X,Y)=(\bar tX^m-1)^2(\bar t Y^m-1)^2g_{a,b}(\beta\frac{\bar tX^m+1}{\bar tX^m-1},\beta\frac{\bar tY^m+1}{\bar tY^m-1}).
$$
\begin{lemma}\label{alldebolenuova}
Let $(x,y)$ be an affine point of the curve $L_{a,b,\bar t, m}(X,Y)=0$. If 
$$(\bar tx^m-1)(\bar t y^m-1)(x^m-y^m)\neq 0,$$ then $P$ is collinear with 
 $Q_{\bar t x^m}$ and $Q_{\bar t y^m}$.
\end{lemma}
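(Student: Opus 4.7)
The plan is to verify the lemma by direct algebraic manipulation: I will show that $g_{a,b}(X,Y)=0$ encodes the collinearity of $P$ with two affine points of $\cX$ whose first coordinates are $X$ and $Y$, and that the substitution $X\mapsto \beta(\bar t X^m+1)/(\bar t X^m-1)$, $Y\mapsto \beta(\bar t Y^m+1)/(\bar t Y^m-1)$ (with denominators cleared) transports this collinearity to the parametrized points $Q_{\bar t X^m}$ and $Q_{\bar t Y^m}$.

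\textbf{Step 1: collinearity via $g_{a,b}$.} For $X,Y\neq\pm\beta$ with $X\neq Y$, write $R_X=(X,1/(X^2-\beta^2))$ and $R_Y=(Y,1/(Y^2-\beta^2))$; both are affine points of $\cX$. I would compute the determinant
$$
\det\begin{pmatrix} a & b & 1 \\ X & 1/(X^2-\beta^2) & 1 \\ Y & 1/(Y^2-\beta^2) & 1 \end{pmatrix},
$$
clear denominators by multiplying through by $(X^2-\beta^2)(Y^2-\beta^2)$, and factor out $(X-Y)$ using the identity $X(X^2-\beta^2)-Y(Y^2-\beta^2)=(X-Y)(X^2+XY+Y^2-\beta^2)$. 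Rearranging the remaining factor yields exactly $-g_{a,b}(X,Y)$. Hence, under the stated nondegeneracy, $g_{a,b}(X,Y)=0$ is equivalent to collinearity of $P,R_X,R_Y$.

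\textbf{Step 2: identifying $R_{X^*}$ with $Q_v$.} Denote $v=\bar t x^m$, $w=\bar t y^m$, and set $X^*=\beta(v+1)/(v-1)$, $Y^*=\beta(w+1)/(w-1)$. A direct simplification gives $(X^*)^2-\beta^2=4v\beta^2/(v-1)^2$, so $1/((X^*)^2-\beta^2)=(v-1)^2/(4v\beta^2)$, which matches the second coordinate of $Q_v$ in the parametrization. Thus $R_{X^*}=Q_{\bar t x^m}$ and analogously $R_{Y^*}=Q_{\bar t y^m}$ whenever $v,w\notin\{0,1\}$ (and the two degenerate values at worst identify these points with $X_\infty$ or $Y_\infty$). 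Since $g_{a,b}$ has degree two in each variable, multiplying by $(\bar t X^m-1)^2(\bar t Y^m-1)^2$ exactly clears the denominators produced by the substitution, so the resulting polynomial is precisely $L_{a,b,\bar t, m}(X,Y)$.

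\textbf{Step 3: checking the hypotheses.} Cross-multiplying $\beta(v+1)/(v-1)=\beta(w+1)/(w-1)$ yields $2\bar t(y^m-x^m)=0$, so $X^*=Y^*$ iff $x^m=y^m$; the hypothesis $x^m\neq y^m$ therefore ensures $X^*\neq Y^*$, permitting the cancellation of $(X-Y)$ in Step~1. The hypothesis $(\bar t x^m-1)(\bar t y^m-1)\neq 0$ prevents the clearing denominators from being $0$ and simultaneously rules out the degenerate value $v=1$ (i.e.\ the neutral element $X_\infty$). Combining Steps 1 and 2, vanishing of $L_{a,b,\bar t,m}$ at $(x,y)$ is equivalent to $g_{a,b}(X^*,Y^*)=0$, which in turn means $P,R_{X^*},R_{Y^*}=P,Q_{\bar t x^m},Q_{\bar t y^m}$ are collinear.

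The computations are entirely routine; the only subtlety is bookkeeping the conditions under which each substitution and cancellation is legitimate, and the small residual case in which one of $x,y$ is zero (so one of $Q_{\bar t x^m}, Q_{\bar t y^m}$ degenerates to the isolated double point $Y_\infty$), which I expect to handle by interpreting the collinearity projectively—the line through $P$ and $Y_\infty$ being the vertical $X=a$, matching the fact that $X^*=-\beta$ in that case.
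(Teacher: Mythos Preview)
Your argument is correct and essentially identical to the paper's: compute the collinearity determinant to obtain $g_{a,b}$, identify the parametrized point $Q_v$ with the affine point $(\beta(v+1)/(v-1),\,1/((\beta(v+1)/(v-1))^2-\beta^2))$, and observe that clearing denominators in $g_{a,b}(X^*,Y^*)$ yields $L_{a,b,\bar t,m}$. Your closing remark on the case $x=0$ is slightly garbled (the equality $X^*=-\beta$ does not by itself ``match'' the vertical line $X=a$; what one actually checks is that $g_{a,b}(-\beta,Y)=-(Y-a)(Y-\beta)$ forces $Y^*=a$), but the paper does not treat this boundary case either and it is irrelevant for the later applications.
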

\begin{proof}
We first note that for
 $u,v$ distinct elements in $\K\setminus \{\pm \beta\}$, the point $P$ is collinear with $(u,\frac{1}{u^2-\beta^2})$ and $(v,\frac{1}{v^2-\beta^2})$ if and only if 
$g_{a,b}(u,v)=0$. In fact,
$$\det\left( \begin{array}{ccc}
u&\frac{1}{u^2-\beta^2}&1\\
v&\frac{1}{v^2-\beta^2}&1\\
a&b&1\\
\end{array}\right)
$$
is equal to
$$\frac{1}{(u^{2}-\beta^2)(v^{2}-\beta^2)}(v-u)[bu^{2}v^{2}-(b\beta^{2}+1)(u^{2}+v^{2})-uv+a(u+v)+b\beta^4+\beta^{2}].$$
It is straightforward to check that $Q_{\bar tx^m}$ coincides with $(u,\frac{1}{u^2-\beta^2})$  precisely when $u=\beta\frac{\bar t x^m+1}{\bar t x^m-1}$.
Then the claim follows by the definition of $L_{a,b,\bar t, m}$.
\end{proof}

The curve with equation $L_{a,b,\bar t,m}(X,Y)=0$ actually belongs to the family described in Section \ref{secfam}.
\begin{lemma}\label{l55} Let 
$$
A=\frac{a+\beta}{a-\beta}, \qquad B=\frac{8b\beta^3}{a-\beta}.
$$
Then
$$
L_{a,b,\bar t, m}(X,Y)=-2\beta(a-\beta)f_{A,B,\bar t,m}(X,Y)
$$
where $f_{A,B,\bar t,m}$ is defined as in \eqref{curva2}.
\end{lemma}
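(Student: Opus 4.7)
The plan is a direct algebraic verification, since the claim is an identity between two explicit polynomials. To manage the bookkeeping I would introduce the shorthand $U=\bar t X^m$ and $V=\bar t Y^m$; the arguments of $g_{a,b}$ in the definition of $L_{a,b,\bar t,m}$ then become $u=\beta(U+1)/(U-1)$ and $v=\beta(V+1)/(V-1)$. From the elementary identities $u-\beta=2\beta/(U-1)$ and $u+\beta=2\beta U/(U-1)$ (and symmetrically for $v$), one reads off
$$u^2-\beta^2=\frac{4\beta^2 U}{(U-1)^2},\qquad uv+\beta^2=\frac{2\beta^2(UV+1)}{(U-1)(V-1)},\qquad u+v=\frac{2\beta(UV-1)}{(U-1)(V-1)},$$
together with the analogue for $v^2-\beta^2$. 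These are the only rational expressions that will appear after substitution.

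Next I would rewrite $g_{a,b}(u,v)$ as a combination of precisely these building blocks. A short rearrangement of
$g_{a,b}(u,v)=bu^2v^2-(b\beta^2+1)(u^2+v^2)-uv+a(u+v)+\beta^2(b\beta^2+1)$
yields
$$g_{a,b}(u,v)=b(u^2-\beta^2)(v^2-\beta^2)-(u^2-\beta^2)-(v^2-\beta^2)-(uv+\beta^2)+a(u+v),$$
after which multiplying by $(U-1)^2(V-1)^2$ clears every denominator and converts $L_{a,b,\bar t,m}(X,Y)$ into an explicit polynomial in $U$ and $V$, namely
$$16b\beta^4 UV-4\beta^2 U(V-1)^2-4\beta^2 V(U-1)^2-2\beta^2(UV+1)(U-1)(V-1)+2a\beta(UV-1)(U-1)(V-1).$$

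The last step is to expand and collect the coefficients of the seven monomials $U^2V^2,\,U^2V,\,UV^2,\,UV,\,U,\,V,\,1$, and to compare with the analogous expansion of $-2\beta(a-\beta)f_{A,B,\bar t,m}(X,Y)$ (which is already polynomial in $U,V$). Using $A=(a+\beta)/(a-\beta)$, the coefficients of $U^2V^2$, $U^2V$, $UV^2$, $U$, $V$ and the constant term each reduce to the one-line identity $-2\beta(a-\beta)A=-2\beta(a+\beta)$ and trivial variants of it. The only place where $B$ genuinely enters is the coefficient of $UV$, which requires verifying that $2\beta(a-\beta)(3A+B-3)=12\beta^2+16b\beta^4$; this follows from $3A+B-3=(6\beta+8b\beta^3)/(a-\beta)$, immediate from the definitions of $A$ and $B$. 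The main obstacle is purely bookkeeping—keeping signs and cross terms straight during the expansion—and no conceptual step beyond the substitution and regrouping is needed.
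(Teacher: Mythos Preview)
Your proposal is correct and is exactly the ``straightforward computation'' that the paper invokes without details: the paper's own proof consists of the single sentence ``The proof is a straightforward computation.'' Your substitution $U=\bar t X^m$, $V=\bar t Y^m$, the regrouping of $g_{a,b}$ in terms of $u^2-\beta^2$, $v^2-\beta^2$, $uv+\beta^2$, $u+v$, and the monomial-by-monomial comparison all check out, including the $UV$-coefficient where $B$ enters.
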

\begin{proof}
The proof is a straightforward computation.
\end{proof}

Henceforth,  $\sqrt{-3}$ will denote a fixed square root of $-3$ in $\fqs$.

\begin{lemma} If
\begin{equation}\label{condizione}
(a,b)\notin \Big\{(0,-\frac{9}{8\beta^2}), (\beta \sqrt{-3},0), (-\beta \sqrt{-3},0) \Big\}
\end{equation}
then
 $L_{a,b,\bar t, m}(X,Y)=0$ is an absolutely irreducible curve with genus less than or equal to $3m^2-3m+1$. 
\end{lemma}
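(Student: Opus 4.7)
The plan is to reduce everything to Proposition \ref{P26apr13} via the identity in Lemma \ref{l55}, and then to check that the three genericity hypotheses of that proposition translate into excluding exactly the three points listed in \eqref{condizione}, together with the standing assumption $b(a^2-\beta^2)\neq 1$.

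First I would observe that since $a\in\fq$ and $\beta\in\fqs\setminus\fq$, the factor $-2\beta(a-\beta)$ in Lemma \ref{l55} is a nonzero constant. Hence $L_{a,b,\bar t,m}=0$ and $\cC_{A,B,\bar t,m}=0$ (with $A=\tfrac{a+\beta}{a-\beta}$, $B=\tfrac{8b\beta^3}{a-\beta}$) define the same plane curve, so it suffices to verify the three hypotheses of Proposition \ref{P26apr13} under the present assumptions.

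Next I would translate each of the three conditions back to the $(a,b)$-plane by direct computation. Writing $A-1=\tfrac{2\beta}{a-\beta}$, a short manipulation gives $AB=(A-1)^3$ if and only if $b(a^2-\beta^2)=1$; this is excluded by the standing hypothesis of the section. The condition $A\neq 0$ is $a\neq -\beta$, which is automatic since $a\in\fq$ and $\beta\notin\fq$. Finally, $A^3=-1$ rewrites as $(a+\beta)^3+(a-\beta)^3=0$, i.e.\ $2a(a^2+3\beta^2)=0$, so $A^3=-1$ exactly when $a=0$ or $a=\pm\beta\sqrt{-3}$.

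It remains to examine these three possibilities and check in each case that $B=1-(A-1)^3$ forces $(a,b)$ to equal one of the three excluded points. For $a=0$ one finds $A=-1$, so $1-(A-1)^3=9$, while $B=-8b\beta^2$; equality gives $b=-\tfrac{9}{8\beta^2}$. For $a=\pm\beta\sqrt{-3}$ a similar computation shows $(A-1)^3=1$, hence $1-(A-1)^3=0$, and $B$ is a nonzero scalar multiple of $b$, so equality forces $b=0$. In each case the resulting $(a,b)$ is one of the three pairs excluded in \eqref{condizione}, so under the hypothesis of the lemma the third condition of Proposition \ref{P26apr13} is also satisfied. Proposition \ref{P26apr13} then delivers absolute irreducibility and the genus bound $g\le 3m^2-3m+1$. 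The only mildly delicate step is the case $a=\pm\beta\sqrt{-3}$, where one has to carry the computation in $\fqs$ and check that the denominator $a-\beta$ is nonzero and that $(A-1)^3=1$ — everything else is a routine check once the dictionary from Lemma \ref{l55} is in place.
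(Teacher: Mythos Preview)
Your proof is correct and follows essentially the same route as the paper: identify $L_{a,b,\bar t,m}=0$ with $\cC_{A,B,\bar t,m}$ via Lemma~\ref{l55}, then verify that the three hypotheses of Proposition~\ref{P26apr13} fail only at the three excluded points (using the standing assumption $b(a^2-\beta^2)\neq 1$ to rule out $AB=(A-1)^3$, and $a\in\fq$ to rule out $A=0$). The only cosmetic difference is that in the case $a=\pm\beta\sqrt{-3}$ the paper solves $B=1-(A-1)^3$ for $b$ in closed form and then substitutes, whereas you compute $(A-1)^3=1$ directly; both reach the same conclusion.
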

\begin{proof}
For $A,B$ as in Lemma \ref{l55},
let $\cC_{A,B,\bar t,m}$ be as in \eqref{curva2bis}.
By Lemma \ref{l55}, the curve $L_{a,b,\bar t, m}(X,Y)=0$ is 
actually $\cC_{A,B,\bar t,m}$.
Note that $m$ divides $q^2-1$ and that each coefficient of 
$f_{A,B,\bar t, m}(X,Y)$ lies in $\fqs$. Then by Proposition \ref{P26apr13} the curve
$\cC_{A,B,\bar t,m}$ is absolutely irreducible of genus $g\le 3m^2-3m+1$, provided that none of the following holds:

\begin{itemize}

\item[(1)] $AB=(A-1)^3$;

\item[(2)] $A=0$;

\item[(3)] $A^3=-1$ and $B=1-(A-1)^3$.
\end{itemize}
Case (1) cannot occur as $b(a^2-\beta^2)\neq 1$. Also, $a \in \fq$ implies $a+\beta \neq 0$, which rules out (2).
%
Assume then that (3) holds. Then $A^3=-1$ implies $a(a^2+3\beta^2)=0$.
From  $B=1-(A-1)^3$ we deduce
$$b=3\frac{a^2+3\beta^2}{8\beta^{2}(\beta a-\beta^2)}.$$
Then either $(a,b)=(0,-\frac{9}{8\beta^2})$ or $(a,b)=(\pm\beta \sqrt{-3},0)$, a contradiction.
\end{proof}
\begin{remark}\label{quattordici} Let $q=p^s$ with $p>3$ a prime. Then $-3$ is a non-square in $\fq$ if and only if $s$ is odd and $p\equiv 2 \pmod 3$;
see e.g.  {\rm \cite[Lemma 4.5]{GFFA}}.
\end{remark}

In order to show that if \eqref{condizione} holds then $P$ is collinear with two points in $K_T$, we need to ensure the existence of a point $(x,y)$ of the curve $L_{a,b,\bar t, m}(X,Y)=0$ such that $Q_{\bar tx^m}$ and $Q_{\bar t y^m}$ are distinct points in $K_T$. To this end, it is useful to consider a curve which is birationally equivalent to $L_{a,b,\bar t, m}(X,Y)=0$, but, unlike $L_{a,b,\bar t, m}(X,Y)=0$, is defined over $\fq$. 

Let
$$
M_{a,b,\bar t, m}(R,V):=(R-\beta)^{2m}(V-\beta)^{2m}L_{a,b,\bar t,m}\Big(\frac{R+\beta}{R-\beta},\frac{V+\beta}{V-\beta}\Big)=0.
$$

%

\begin{lemma}\label{irrmodificato}  If \eqref{condizione} holds, then
$M_{a,b,\bar t,m}(R,V)=0$ is an absolutely irreducible curve birationally equivalent to $L_{a,b,\bar t,m}(X,Y)=0$.
\end{lemma}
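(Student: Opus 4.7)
The plan is to exhibit the explicit birational map relating $L=0$ and $M=0$, transfer absolute irreducibility through it, and then verify that clearing denominators has not introduced spurious components along the exceptional divisor.

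First, I would observe that $M$ is obtained from $L$ by the substitution $\psi:(R,V)\mapsto\bigl(\tfrac{R+\beta}{R-\beta},\tfrac{V+\beta}{V-\beta}\bigr)$ followed by multiplication by $(R-\beta)^{2m}(V-\beta)^{2m}$ to clear denominators. The map $\psi$ is a birational involution of the affine plane, with inverse $(X,Y)\mapsto\bigl(\beta\tfrac{X+1}{X-1},\beta\tfrac{Y+1}{Y-1}\bigr)$, and it restricts to a biregular isomorphism from $U=\{R\neq\beta,\,V\neq\beta\}$ onto $U'=\{X\neq 1,\,Y\neq 1\}$. By the defining identity
\[
M_{a,b,\bar t,m}(R,V)=(R-\beta)^{2m}(V-\beta)^{2m}\,L_{a,b,\bar t,m}\bigl(\psi(R,V)\bigr),
\]
together with the non-vanishing of $(R-\beta)^{2m}(V-\beta)^{2m}$ on $U$, the map $\psi$ restricts to a biregular isomorphism $\{M=0\}\cap U\to\{L=0\}\cap U'$.

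Second, I would invoke the previous lemma to conclude that $\{L=0\}$ is absolutely irreducible. Since $L$ has bidegree $(2m,2m)$, it is certainly not a scalar multiple of $X-1$ or $Y-1$, so $\{L=0\}\cap U'$ is a non-empty open subset of $\{L=0\}$, hence absolutely irreducible. Pulling this back through $\psi$, the open set $\{M=0\}\cap U$ is absolutely irreducible as well.

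The main obstacle is ruling out extra irreducible components of $\{M=0\}$ contained in $U^c=\{R=\beta\}\cup\{V=\beta\}$, or equivalently, showing that neither $R-\beta$ nor $V-\beta$ divides $M$ in $\K[R,V]$. For this, I would expand
\[
M(R,V)=\sum_{i,j}c_{ij}(R+\beta)^i(R-\beta)^{2m-i}(V+\beta)^j(V-\beta)^{2m-j},
\]
where the $c_{ij}$ are the coefficients of $L$, so that specializing $R=\beta$ leaves only the $i=2m$ terms. Using the explicit form $L=-2\beta(a-\beta)f_{A,B,\bar t,m}$ from Lemma \ref{l55}, a short calculation yields
\[
M_{a,b,\bar t,m}(\beta,V)=-2\beta(a-\beta)(2\beta)^{2m}\bar t^{\,3}(V+\beta)^m\bigl[A(V-\beta)^m-\bar t(V+\beta)^m\bigr],
\]
which does not vanish identically in $V$ (its value at $V=\beta$ equals $2\beta(a-\beta)(2\beta)^{4m}\bar t^{\,4}\neq 0$, using $a\neq\beta$, $\beta\neq 0$ and $\bar t\neq 0$). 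By the symmetry of $f_{A,B,\bar t,m}$ in $X$ and $Y$, the analogous statement $V-\beta\nmid M$ holds. Consequently, $\{M=0\}$ has a single irreducible component---the closure of $\{M=0\}\cap U$---which is absolutely irreducible and birationally equivalent to $\{L=0\}$ through $\psi$.
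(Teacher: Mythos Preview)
Your proof is correct and establishes the same birational equivalence as the paper, but you verify absolute irreducibility of $M=0$ by a different mechanism. The paper works entirely in function-field language: it sets $\bar r=\beta\frac{\bar x+1}{\bar x-1}$, $\bar v=\beta\frac{\bar y+1}{\bar y-1}$ inside $\K(\bar x,\bar y)$, observes that $\K(\bar r,\bar v)=\K(\bar x,\bar y)$, and then notes that $[\K(\bar r,\bar v):\K(\bar r)]=[\K(\bar r,\bar v):\K(\bar v)]=2m$ matches the bidegree of $M$, forcing $M$ to be irreducible. Your route is more geometric: you transport irreducibility of $\{L=0\}$ to the open piece $\{M=0\}\cap U$ via the biregular map $\psi$, and then rule out components supported on $\{R=\beta\}\cup\{V=\beta\}$ by the explicit evaluation $M(\beta,\beta)=2\beta(a-\beta)(2\beta)^{4m}\bar t^{\,4}\neq 0$. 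The paper's argument is shorter and computation-free once the extension degrees are known, and it simultaneously shows that $M$ is irreducible as a polynomial (not merely that $\{M=0\}$ is an irreducible variety); your argument is more self-contained and avoids appealing to those degrees, at the cost of the explicit calculation using Lemma~\ref{l55}.
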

\begin{proof}
Let $\K(\bar x,\bar y)$ be the function field of $L_{a,b,\bar t,m}(X,Y)=0$, so that $L_{a,b,\bar t,m}(\bar x,\bar y)=0$. Both the degrees of the extensions $\K(\bar x,\bar y):\K(\bar x)$ and $\K(\bar x,\bar y):\K(\bar y)$ are equal to $2m$. Let
$$
\bar r:=\beta \frac{\bar x+1}{\bar x-1},\qquad \bar v:=\beta\frac{\bar y+1}{\bar y-1}.
$$
Then $M_{a,b,\bar t,m}(\bar r,\bar v)=0$. As
$$
\bar x= \frac{\bar r+\beta}{\bar r-\beta},\qquad \bar y=\frac{\bar v+\beta}{\bar v-\beta}
$$
we have
$$
\K(\bar x,\bar y)=\K(\bar r,\bar v),\qquad \K(\bar x)=\K(\bar r),\qquad \K(\bar y)=\K(\bar v).
$$
Therefore, both the degrees of the extensions $\K(\bar r,\bar v):\K(\bar r)$ and $\K(\bar r,\bar v):\K(\bar v)$ are equal to $2m$. 
As the degrees of  $M_{a,b,\bar t,m}(R,V)$ in both $R$ and  $V$ are also equal to $2m$, the polynomial $M_{a,b,\bar t, m}(R,V)$ cannot be reducible.
\end{proof}

\begin{lemma}\label{troppe} The curve with equation $M_{a,b,\bar t,m}(R,V)=0$ is defined over $\fq$.
\end{lemma}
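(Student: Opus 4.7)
The plan is to show that $M_{a,b,\bar t,m}(R,V)$ is a non-zero scalar multiple of a polynomial in $\fq[R,V]$, by tracking how the Frobenius automorphism $\sigma\colon\fqs\to\fqs$, $\sigma(z)=z^q$, acts on the polynomial. First I record how $\sigma$ acts on the ``non-$\fq$'' constants appearing in $M_{a,b,\bar t,m}$. Since $\beta\in\fqs\setminus\fq$ and $\beta^2\in\fq$, we have $(\beta^q)^2=\beta^2$ and $\beta^q\neq\beta$, so $\sigma(\beta)=-\beta$. Moreover, by the explicit description of $G$ via $u\mapsto Q_{(u+\beta)/(u-\beta)}$ with $u\in\fq$, the parameter $\bar t$ of any element of $G\setminus\{X_\infty\}$ satisfies $((u+\beta)/(u-\beta))^{q+1}=1$; in particular $\bar t^{q+1}=1$, so $\sigma(\bar t)=\bar t^{-1}$.

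Next I would rewrite $M_{a,b,\bar t,m}(R,V)$ in a form where $\sigma$ is easy to apply. With $X=(R+\beta)/(R-\beta)$ one checks that $(R-\beta)^m(\bar tX^m-1)=P(R)$ and $(R-\beta)^m(\bar tX^m+1)=N(R)$, where
$$P(T):=\bar t(T+\beta)^m-(T-\beta)^m,\qquad N(T):=\bar t(T+\beta)^m+(T-\beta)^m.$$
Hence $\beta(\bar tX^m+1)/(\bar tX^m-1)=\beta N(R)/P(R)$ (and similarly in $V,Y$), and substituting into the defining formula for $L_{a,b,\bar t,m}$ yields
$$M_{a,b,\bar t,m}(R,V)=P(R)^2P(V)^2\,g_{a,b}\!\Bigl(\beta\tfrac{N(R)}{P(R)},\,\beta\tfrac{N(V)}{P(V)}\Bigr).$$

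A direct computation then gives $\sigma(P(T))=\bar t^{-1}(T-\beta)^m-(T+\beta)^m=-\bar t^{-1}P(T)$ and $\sigma(N(T))=\bar t^{-1}(T-\beta)^m+(T+\beta)^m=\bar t^{-1}N(T)$. The three sign twists conspire so that $\sigma(\beta N(T)/P(T))=\beta N(T)/P(T)$; since $g_{a,b}$ has coefficients in $\fq$ (recall $a,b\in\fq$ and $\beta^2\in\fq$), the $g_{a,b}$-factor above is $\sigma$-invariant. On the other hand $\sigma(P(R)^2P(V)^2)=\bar t^{-4}P(R)^2P(V)^2$, so altogether $\sigma(M_{a,b,\bar t,m})=\bar t^{-4}M_{a,b,\bar t,m}$.

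Finally, since $\sigma(\bar t^{-2})=\bar t^{2}$, the polynomial $\bar t^{-2}M_{a,b,\bar t,m}$ is fixed by $\sigma$ and therefore belongs to $\fq[R,V]$; as it defines the same curve as $M_{a,b,\bar t,m}$, the curve is defined over $\fq$. The only real subtlety in the argument is the identification $\sigma(\bar t)=\bar t^{-1}$, which relies on the explicit parametrisation of the $\fq$-rational points of $G$ by $(q+1)$-st roots of unity; the rest is careful Frobenius bookkeeping in which the sign coming from $\sigma(\beta)=-\beta$ cancels precisely against the $\bar t$-twists coming from $\sigma(\bar t)=\bar t^{-1}$.
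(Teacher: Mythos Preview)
Your proof is correct and takes a genuinely different route from the paper. The paper proceeds constructively: it introduces auxiliary polynomials $\theta_1(Z)=(Z+\beta)^m+(Z-\beta)^m$ and $\theta_2(Z)=\beta^{-1}\bigl((Z+\beta)^m-(Z-\beta)^m\bigr)$, observes that both lie in $\fq[Z]$ because only even powers of $\beta$ survive, and then sets $h(Z)=t\theta_1(Z)+\beta^2\theta_2(Z)$ and $l(Z)=\theta_1(Z)+t\theta_2(Z)$ with $t=\beta(\bar t+1)/(\bar t-1)\in\fq$. A short computation shows $h(Z)/l(Z)=\beta N(Z)/P(Z)$ in your notation, and the paper then writes $M_{a,b,\bar t,m}(R,V)$ as an explicit $\fqs$-scalar times $l(R)^2l(V)^2\,g_{a,b}\bigl(h(R)/l(R),h(V)/l(V)\bigr)\in\fq[R,V]$. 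Your Frobenius argument is shorter and more conceptual---the key identities $\sigma(\beta)=-\beta$ and $\sigma(\bar t)=\bar t^{-1}$ give $\sigma(M)=\bar t^{-4}M$ directly---but the paper's explicit $\fq$-polynomials $h$ and $l$ are not incidental: they are reused in Remark~\ref{coordinate}, Lemma~\ref{exitre} and Proposition~\ref{archibishop} to express the $X$-coordinates of points of $K_T$ over $\fq$ and to set up the double cover needed for the bicovering analysis. So your approach buys a cleaner proof of this lemma in isolation, while the paper's buys machinery for what follows.
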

\begin{proof}
We are going to show that up to a scalar factor in $\K^*$ the coefficients of 
$M_{a,b,\bar t,m}(R,V)$ lie in $\fq$.
Consider the following polynomials in $\fqs[Z]$:
$$
 \theta_1(Z)=(Z+\beta)^m+(Z-\beta)^m,\qquad  \theta_2(Z)=\frac{1}{\beta}((Z+\beta)^m-(Z-\beta)^m),
$$
Let
$$
t=\beta \frac{\bar t+1}{\bar t-1},
$$
As both $t$ and $\beta^2$ belong to $\fq$, 
the polynomials
\begin{equation}\label{hl}
h(Z)=t\theta_1(Z)+\beta^2\theta_2(Z),\qquad l(Z)=\theta_1(Z)+t\theta_2(Z)
\end{equation}
actually lie in $\fq[Z]$.
Taking into account that $ t=\beta\frac{\bar t+1}{\bar t-1}$, a straightforward computation gives
\begin{equation}\label{keykey}
\bar t\Big(\frac{Z+\beta}{Z-\beta}\Big)^m=\frac{\frac{h(Z)}{l(Z)}+\beta}{\frac{h(Z)}{l(Z)}-\beta}.
\end{equation}
Whence, 
$$
\bar t\Big(\frac{Z+\beta}{Z-\beta}\Big)^m+1=\frac{2h(Z)}{h(Z)-\beta l(Z)}\quad \text{and} \quad \bar t\Big(\frac{Z+\beta}{Z-\beta}\Big)^m-1=\frac{2\beta l(Z)}{h(Z)-\beta l(Z)}.
$$
We then have that $ M_{a,b,\bar t,m}(R,V)$ coincides with
$$
(R-\beta)^{2m}(V-\beta)^{2m}
\Big(\frac{2\beta l(R)}{h(R)-\beta l(R)}\Big)^2
\Big(\frac{2\beta l(V)}{h(V)-\beta l(V)}\Big)^2
g_{a,b}\left(\frac{h(R)}{l(R)},\frac{h(V)}{l(V)}\right).
$$
From
$$
h(Z)-\beta l(Z)=2(t-\beta)(Z-\beta)^m
$$
we obtain
$$
M_{a,b,\bar t,m}(R,V)=\frac{\beta^4}{(t-\beta)^4}
l(R)^2l(V)^2
g_{a,b}\left(\frac{h(R)}{l(R)},\frac{h(V)}{l(V)}\right),
$$
whence the assertion.
\end{proof}

\begin{remark}\label{coordinate}
By the proof of Lemma {\rm{\ref{alldebolenuova}}}, 
for any $z\in \fq$, the $X$-coordinate of the point
$
Q_{\bar t(\frac{z+\beta}{z-\beta})^m }
$
is
$
u=\beta(\bar t(\frac{z+\beta}{z-\beta})^m +1)/(\bar t(\frac{z+\beta}{z-\beta})^m -1).
$
Then, by \eqref{keykey},  $u=\frac{h(z)}{l(z)}$ holds, with $h(Z)$ and $l(Z)$  as in \eqref{hl}. 
\end{remark}

\begin{remark}\label{notanodale}
If $(r,v)$ is an $\fq$-rational affine point of the curve $M_{a,b,\bar t,m}(R,V)=0$ with
$$
\Big(\frac{r+\beta}{r-\beta}\Big)^m\neq \Big(\frac{v+\beta}{v-\beta}\Big)^m
$$
then $P=(a,b)$ is collinear with 
 $Q_{\bar t (\frac{r+\beta}{r-\beta})^m}$ and $Q_{\bar t (\frac{v+\beta}{v-\beta})^m}$,
 which are two distinct points in $K_T$ by \eqref{descri}.
\end{remark}

\begin{proposition}\label{archinuovi} Let $P=(a,b)$ be a point in $AG(2,q)$ off $\cX$. Assume that \eqref{condizione} holds. If
$$
q+1-(6m^2-6m+2)\sqrt q \ge 4m^2+8m+1
$$
then $P$  is collinear with two distinct points of $K_T$.
\end{proposition}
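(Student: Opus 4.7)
The plan is to apply the Hasse--Weil bound to the curve $\cM\colon M_{a,b,\bar t,m}(R,V)=0$ and then use Remark \ref{notanodale} to extract the two required points of $K_T$ from a carefully chosen $\fq$-rational place of $\K(\cM)$.

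First, I would invoke Lemmas \ref{irrmodificato} and \ref{troppe}: $\cM$ is absolutely irreducible and defined over $\fq$. By Lemma \ref{l55} together with Proposition \ref{P26apr13}, its function field is birationally isomorphic to $\K(\bar x,\bar y)$ and therefore has genus $g\le 3m^2-3m+1$. Proposition \ref{HaWe} then yields at least
$$
q+1-(6m^2-6m+2)\sqrt q
$$
$\fq$-rational places of $\K(\cM)$. By Remark \ref{notanodale}, it suffices to exhibit one such place whose center $(r,v)$ is affine and satisfies $\left(\tfrac{r+\beta}{r-\beta}\right)^m \neq \left(\tfrac{v+\beta}{v-\beta}\right)^m$; the hypothesis of the proposition will be used to force the existence of such a place via a counting argument against the \emph{bad} ones.

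The bad $\fq$-rational places split into two types. The first are the places at infinity of $\cM$, i.e.\ the poles of $\bar r$ or of $\bar v$; the proof of Lemma \ref{irrmodificato} shows $[\K(\cM):\K(\bar r)]=[\K(\cM):\K(\bar v)]=2m$, so each of $(\bar r)_\infty$ and $(\bar v)_\infty$ has degree $2m$ and contributes altogether at most $4m$ places. The second type are the affine places where $\bar u=\bar z$ (equivalently $\bar x^m=\bar y^m$). To handle these I would descend to $\K(\bar u,\bar z)$: by Proposition \ref{clear} the pole divisor of $\bar u-\bar z$ there is supported on $\gamma_1+\gamma_2+\gamma_3+\gamma_4$ and so has degree at most $4$, and Proposition \ref{clear2} shows that each $\gamma_j$ is totally ramified of index $m$ in the tower to $\K(\bar x,\bar y)$, lifting this bound to a pole (and hence zero) divisor of $\bar u-\bar z$ in $\K(\cM)$ of degree at most $4m^2$. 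Combining these two estimates, together with a careful accounting of the boundary places contributed by the M\"obius substitution $X=(R+\beta)/(R-\beta)$ connecting $\cM$ to $\cC_{A,B,\bar t,m}$, yields an overall bound of $4m^2+8m+1$ on the number of bad $\fq$-rational places.

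Subtracting this from the Hasse--Weil lower bound, the hypothesis
$$
q+1-(6m^2-6m+2)\sqrt q \;\geq\; 4m^2+8m+1
$$
forces the existence of at least one good $\fq$-rational place, and Remark \ref{notanodale} then produces the two distinct points of $K_T$ collinear with $P$. The main obstacle is the bookkeeping in the bad-place count: the $\bar u=\bar z$ contribution is handled cleanly via the Kummer structure, but tracking exactly how the places at infinity of $\cM$ and the special fibers $R=\pm\beta$, $V=\pm\beta$ interact under the Möbius substitution, in order to arrive at the precise constant $4m^2+8m+1$ rather than a merely qualitatively correct upper bound, requires a careful case analysis.
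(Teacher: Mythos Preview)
Your plan is essentially the paper's own proof. The paper defines the bad set $E$ by three conditions: (1) poles of $\bar r$ or $\bar v$ (at most $4m$ places, as you say), (2) poles of $\bar x=\frac{\bar r+\beta}{\bar r-\beta}$ or $\bar y=\frac{\bar v+\beta}{\bar v-\beta}$, and (3) zeros of $\bar x^m-\bar y^m$ (at most $4m^2$, via the descent to $\K(\bar u,\bar z)$ exactly as you outline); then $|E|\le 4m^2+8m$, Hasse--Weil plus the hypothesis give an $\fq$-rational place outside $E$, and Remark~\ref{notanodale} finishes.

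The ``main obstacle'' you flag is lighter than you fear. The boundary contribution from the M\"obius substitution is precisely condition~(2), and its size is read off directly from Propositions~\ref{clear} and~\ref{clear2}: the poles of $\bar u$ in $\K(\bar u,\bar z)$ are $\gamma_1,\gamma_2$, each with $m$ places above it in $\K(\bar x,\bar y)$, so $\bar x$ has $2m$ poles; likewise $\bar y$ has $2m$, giving $4m$ in total and no further case analysis. Two small corrections: only the fibers $R=\beta$, $V=\beta$ (poles of $\bar x,\bar y$) enter, not $R=-\beta$ or $V=-\beta$; and the bad-place bound is $4m^2+8m$, not $4m^2+8m+1$ --- the extra $+1$ in the hypothesis is what guarantees at least one good place survives. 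In fact, since $\beta\notin\fq$, an $\fq$-rational place can never satisfy $\bar r(\gamma)=\pm\beta$ anyway, so condition~(2) is harmless over-counting rather than a genuine obstruction.
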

\begin{proof}
Let $\K(\bar r, \bar v)$ be the function field of $M_{a,b,\bar t, m}(R,V)=0$, so that
$M_{a,b,\bar t,m}(\bar r,\bar v)=0$ holds. Let $E$ be the set of places $\gamma$ of $\K(\bar r,\bar v)$ for which at least one of the following holds:
\begin{itemize}
\item[(1)] $\gamma$ is a pole of either $\bar r$ or $\bar v$;

\item[(2)] $\gamma$ is a pole of either  $\Big(\frac{\bar r+\beta}{\bar r-\beta}\Big)$ or $\Big(\frac{\bar v+\beta}{\bar v-\beta}\Big)$;

\item[(3)] $\gamma$ is a zero of $\Big(\frac{\bar r+\beta}{\bar r-\beta}\Big)^m- \Big(\frac{\bar v+\beta}{\bar v-\beta}\Big)^m$.
\end{itemize}
As both degrees of the extensions $\K(\bar r,\bar v):\K(\bar r)$ and 
$\K(\bar r,\bar v):\K(\bar v)$ are equal to $2m$, the number of places satisfying (1) is at most $4m$. According to the proof of Lemma \ref{irrmodificato}, we have that
$$
\bar x= \frac{\bar r+\beta}{\bar r-\beta},\qquad \bar y=\frac{\bar v+\beta}{\bar v-\beta}
$$
satisfy $f_{A,B,\bar t, m}(\bar x,\bar y)=0$. Therefore, 
by Propositions \ref{clear} and \ref{clear2}
the number places satysfying (2) is $4m$. 
It is easily seen that in $\K(\bar u, \bar z)$ the rational function $\bar u-\bar z$ has at most $4$ distinct zeros; hence, the set of poles of $\bar x^m-\bar y^m$ in $\K(\bar x,\bar y)$ has size less than or equal to $4m^2$. This shows that $E$ comprises at most $4m^2+8m$ places.
Our assumption on $q$ and $m$, together with the Hasse-Weil bound, ensures the existence of at least $4m^2+8m+1$ $\fq$-rational places of $\K(\bar r,\bar v)$; hence,
there exists at least one $\fq$-rational place $\gamma_0$ of $\K(\bar r,\bar s)$ not in $E$.
Let $\tilde r=\bar r(\gamma_0)$ and $ \tilde v=\bar v(\gamma_0).$
By Remark \ref{notanodale},
$P=(a,b)$ is collinear with 
 $Q_{\bar t (\frac{\tilde r+\beta}{\tilde r-\beta})^m}$ and $Q_{\bar t (\frac{\tilde v+\beta}{\tilde v-\beta})^m}$,
 which are two distinct points in $K_T$.
\end{proof}

The following technical variant of Proposition \ref{archinuovi} will also be needed.
\begin{proposition}\label{archinuoviVAR}  Let $P=(a,b)$ be a point in $AG(2,q)$ off $\cX$. Assume that \eqref{condizione} holds. If
\begin{equation}\label{aritmeticaVAR}
q+1-(6m^2-6m+2)\sqrt q \ge 8m^2+8m+1
\end{equation}
then $P$  is collinear with two distinct points of $K_T\setminus \{T\}$.
\end{proposition}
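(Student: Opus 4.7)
The plan is to mimic the proof of Proposition \ref{archinuovi}, but to enlarge the bad set $E$ of places of $\K(\bar r, \bar v)$ by two additional conditions, so that the two points of $K_T$ produced at the end are both distinct from $T = Q_{\bar t}$. In view of Remark \ref{notanodale}, both $Q_{\bar t(\frac{\tilde r + \beta}{\tilde r - \beta})^m}$ and $Q_{\bar t(\frac{\tilde v + \beta}{\tilde v - \beta})^m}$ differ from $T$ precisely when $\bar x^m - 1$ and $\bar y^m - 1$ do not vanish at the chosen $\fq$-rational place $\gamma_0$, where as before $\bar x = \frac{\bar r + \beta}{\bar r - \beta}$ and $\bar y = \frac{\bar v + \beta}{\bar v - \beta}$.

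Accordingly, I would add to $E$ the conditions that $\gamma$ is a zero of $\bar x^m - 1$ or of $\bar y^m - 1$. To bound the number of such places I would pass to the subfield $\K(\bar u, \bar z)$, with $\bar u = \bar x^m$ and $\bar z = \bar y^m$, exactly as done in the proof of Proposition \ref{archinuovi} for the count of zeros of $\bar x^m - \bar y^m$. By Proposition \ref{clear}, the pole divisor of $\bar u$ in $\K(\bar u, \bar z)$ is $\gamma_1 + \gamma_2$, so $\bar u - 1$ has pole divisor of degree $2$, and hence at most $2$ distinct zeros in $\K(\bar u, \bar z)$. Since $[\K(\bar x, \bar y) : \K(\bar u, \bar z)] = m^2$, each such zero lifts to at most $m^2$ places above, yielding at most $2m^2$ places of $\K(\bar x, \bar y)$ at which $\bar x^m - 1$ vanishes. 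The symmetric argument with $\bar z - 1$ (whose pole divisor is $\gamma_3 + \gamma_4$) contributes at most $2m^2$ more.

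Combining with the $4m^2 + 8m$ already counted in the proof of Proposition \ref{archinuovi}, the enlarged bad set $E$ has size at most $8m^2 + 8m$. The Hasse--Weil bound (Proposition \ref{HaWe}), together with the genus estimate $g \le 3m^2 - 3m + 1$ inherited from Proposition \ref{P26apr13} via Lemma \ref{irrmodificato}, ensures that $\K(\bar r, \bar v)$ has at least $q + 1 - (6m^2 - 6m + 2)\sqrt q$ $\fq$-rational places, which by \eqref{aritmeticaVAR} is at least $8m^2 + 8m + 1$. Therefore some $\fq$-rational place $\gamma_0$ lies outside $E$, and evaluating at $\gamma_0$ produces, via Remark \ref{notanodale} and the new conditions, two distinct points of $K_T \setminus \{T\}$ collinear with $P$.

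The main obstacle, as in the preceding proposition, is the correct bookkeeping of the places to be excluded; once $\bar u - 1$ and $\bar z - 1$ are understood through Proposition \ref{clear}, their contribution is a clean $2m^2$ each, which is exactly the slack that the inequality \eqref{aritmeticaVAR} provides over the inequality appearing in Proposition \ref{archinuovi}.
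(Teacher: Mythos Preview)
Your proposal is correct and follows essentially the same approach as the paper: you augment the bad set $E$ from the proof of Proposition~\ref{archinuovi} with the zeros of $\bar x^m-1$ and $\bar y^m-1$, bound each contribution by $2m^2$ via Proposition~\ref{clear} and the degree-$m^2$ extension $\K(\bar x,\bar y):\K(\bar u,\bar z)$, and conclude using the extra $4m^2$ of slack granted by~\eqref{aritmeticaVAR}. The paper's own proof is precisely this argument, stated more tersely.
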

\begin{proof}
One can argue as in the proof of Proposition \ref{archinuovi}.
We need to ensure that neither 
 $Q_{\bar t (\frac{\tilde r+\beta}{\tilde r-\beta})^m}$ or $Q_{\bar t (\frac{\tilde v+\beta}{\tilde v-\beta})^m}$ coincides with $T$. As $T=Q_{\bar t}$, this is equivalent to $\gamma_0$ not being a zero of either $(\frac{\bar r+\beta}{\bar r-\beta})^m-1$ or $(\frac{\bar v+\beta}{\bar v-\beta})^m-1$ in the function field $\K(\bar r,\bar v)$. By Proposition \ref{clear}, in  $\K(\bar u,\bar z)$ both rational functions $\bar u-1$ and $\bar z-1$ have at most two distinct zeros. Therefore, there are at most $4m^2$ places $\gamma_0$ that need to be ruled out.
\end{proof}

If \eqref{condizione} is not satisfied, then $P$ is not collinear with any two points of $K_T$. Actually, a stronger statement holds.
\begin{proposition}\label{fuori} Let $a,b \in \fq$ be such that
$$
(a,b)\in \Big\{(0,-\frac{9}{8\beta^2}),(\beta\sqrt{-3},0),(-\beta\sqrt{-3},0)\Big\}.
$$
Then the point $P=(a,b)$ is not collinear with any two  $\fq$-rational affine points of $\cX$.
\end{proposition}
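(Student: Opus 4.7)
The plan is to reduce the statement to a purely polynomial one, and then, for each of the three exceptional pairs $(a,b)$, exploit either an algebraic factorization or a discriminant identity together with the arithmetic of $\fqs/\fq$.

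As a first step I would reduce as follows. The $\fq$-rational affine points of $\cX$ are exactly $(u,(u^2-\beta^2)^{-1})$ for $u\in\fq$, and by the determinant computation at the start of the proof of Lemma \ref{alldebolenuova}, $P=(a,b)$ is collinear with two such distinct points (corresponding to $u_1\neq u_2$ in $\fq$) if and only if $g_{a,b}(u_1,u_2)=0$. Hence it suffices to show that, for each of the three exceptional $(a,b)$, no $(u_1,u_2)\in\fq^2$ with $u_1\neq u_2$ satisfies $g_{a,b}(u_1,u_2)=0$.

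For $(a,b)=(0,-\tfrac{9}{8\beta^2})$, the key identity (verifiable by direct calculation) I expect to establish is
$$
-8\beta^2\, g_{a,b}(X,Y)=(3XY+\beta^2)^2-\beta^2(X-Y)^2=(3XY+\beta X-\beta Y+\beta^2)(3XY-\beta X+\beta Y+\beta^2).
$$
If $(u_1,u_2)\in\fq^2$ annihilated one of the two factors, then, using the $\fq$-vector space decomposition $\fqs=\fq\oplus\beta\fq$, the $\fq$-summand $3u_1u_2+\beta^2$ and the $\beta\fq$-summand $\pm\beta(u_1-u_2)$ would each have to vanish, forcing $u_1=u_2$ and contradicting our assumption.

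For $(a,b)=(\pm\beta\sqrt{-3},0)$, I would first observe that $a\in\fq$ together with $\beta^q=-\beta$ forces $\sqrt{-3}\notin\fq$, so $-3$ is a non-square in $\fq$ (cf.\ Remark \ref{quattordici}). Since $b=0$, $g_{a,b}(X,Y)=-(X^2+Y^2)-XY+a(X+Y)+\beta^2$ is a conic, and setting $s=u_1+u_2$, $p=u_1u_2$, the identity $u_1^2+u_2^2+u_1u_2=s^2-p$ turns $g_{a,b}(u_1,u_2)=0$ into $p=s^2-as-\beta^2$. The discriminant of the quadratic $T^2-sT+p$ is then
$$
\Delta=s^2-4p=-3s^2+4as+4\beta^2,
$$
and a short manipulation exploiting $a^2=-3\beta^2$ yields $-3\Delta=(3s-2a)^2$, so $\Delta=-(3s-2a)^2/3$. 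Since $-3\notin(\fq^*)^2$ we also have $-1/3\notin(\fq^*)^2$, so $\Delta$ is either $0$ or a non-square in $\fq$; in neither case can $u_1,u_2$ be distinct elements of $\fq$.

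The main obstacle will be spotting the quartic factorization in the first case, which does not arise from a general principle but from the specific shape of $g_{a,b}$. Once this identity is in hand, both cases reduce to short algebra combined with the standard Frobenius computation determining when $\sqrt{-3}$ and $\beta\sqrt{-3}$ lie in $\fq$.
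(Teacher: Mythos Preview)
Your proposal is correct. For the first exceptional point $(0,-\tfrac{9}{8\beta^2})$ you follow essentially the paper's route: the same quartic factorization over $\fqs$, and your vector-space decomposition $\fqs=\fq\oplus\beta\fq$ is just a rephrasing of the paper's Frobenius-conjugacy argument (the two factors are interchanged by $\beta\mapsto-\beta$, so both must vanish on an $\fq$-rational point, forcing $u_1=u_2$).

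For $(\pm\beta\sqrt{-3},0)$ you take a genuinely different route. The paper factors $g_{a,b}$ once more, this time as a product of two linear forms over $\fqs$ involving $\sqrt{-3}$, and repeats the Frobenius-conjugate argument. Your approach instead passes to the elementary symmetric functions $s,p$ and computes the discriminant $\Delta$ of $T^2-sT+p$, obtaining the neat identity $-3\Delta=(3s-2a)^2$; since $-3$ is a non-square in $\fq$, $\Delta$ is never a non-zero square and $u_1,u_2$ cannot be distinct in $\fq$. The paper's method has the virtue of uniformity across the three points, at the cost of having to spot a second ad~hoc factorization; your discriminant argument avoids that factorization entirely and is arguably more transparent for the conic case, though it does not unify with the first case.
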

\begin{proof}
We recall that by the proof of Lemma \ref{alldebolenuova},
 the point $P$ is collinear with $(x,\frac{1}{x^2-\beta^2})$ and $(y,\frac{1}{y^2-\beta^2})$, with $x,y\in \fq$, if and only if 
$g_{a,b}(x,y)=0$.
If
$
(a,b)=(0,-\frac{9}{8\beta^2})
$
then
$$g_{a,b}(X,Y)=-\frac{1}{8\beta^2}(9X^2Y^2-\beta^2(X^2+Y^2)+8\beta^2XY+\beta^4)=$$

$$=-\frac{1}{8\beta^2}(3XY - X\beta + Y\beta + \beta^2)(3XY + X\beta -Y\beta + \beta^2).
$$
If $g_{a,b}(x,y)=0$, then either
\begin{equation}\label{9feb}
3xy - x\beta + y\beta + \beta^2=0\quad  \text{ or }\quad 3xy + x\beta -y\beta + \beta^2=0.
\end{equation}
If $(x,y)\in \fq$, then both $x$ and $y$ are fixed by the Frobenius map over $\fq$, and hence both equalities in \eqref{9feb} hold. This easily implies $x=y$. Then no two distinct $\fq$-rational affine points of $\cX$ can be collinear with $(a,b)$.

Note that  $(a,b)=(\pm \beta \sqrt{-3},0)$  can only occur when $-3$ is a non-square in $\fq$, otherwise $\pm\beta\sqrt{-3}\notin \fq$.  In this case, $(\sqrt{-3})^q=-\sqrt{-3}$ holds; also, 

$$
g_{\beta\sqrt{-3},0}(X,Y)=-(X^2+Y^2) - XY +\beta\sqrt{-3}(X + Y) +\beta^2=
$$

$$
=-\Big(X+\frac{1+\sqrt{-3}}{2}Y+\frac{-\beta\sqrt{-3}+\beta}{2}\Big)\Big(X+\frac{1-\sqrt{-3}}{2}Y+\frac{-\beta\sqrt{-3}-\beta}{2}\Big)
$$
and
$$
g_{-\beta\sqrt{-3},0}(X,Y)=-(X^2+Y^2) - XY -\beta\sqrt{-3}(X + Y) +\beta^2=
$$

$$
=-\Big(X+\frac{1-\sqrt{-3}}{2}Y+\frac{\beta\sqrt{-3}+\beta}{2}\Big)\Big(X+\frac{1+\sqrt{-3}}{2}Y+\frac{\beta\sqrt{-3}-\beta}{2}\Big)
$$
The assertion for $(a,b)=(\pm \beta \sqrt{-3},0)$ then follows by the same arguments used for $(a,b)= (0,-\frac{9}{8\beta^2})$.
\end{proof}

In order to investigate the bicovering properties of the arc $K_t$, according to Remark \ref{coordinate}
we need to consider the rational function $
\big(a-\frac{h(\bar r)}{l(\bar r)}\big)\big(a-\frac{h(\bar v)}{l(\bar v)}\big)
$
in the function field of $M_{a,b,\bar t,m}(R,V)=0$.
\begin{lemma}\label{exitre}
Let $P=(a,b)$ be a point in $AG(2,q)$ off $\cX$ satisfying \eqref{condizione}.
Let $\K(\bar r, \bar v)$ be the function field of $M_{a,b,\bar t, m}(R,V)=0$, so that
$M_{a,b,\bar t,m}(\bar r,\bar v)=0$. Then the rational function
$
\big(a-\frac{h(\bar r)}{l(\bar r)}\big)\big(a-\frac{h(\bar v)}{l(\bar v)}\big)
$
is not a square in $\K(\bar r,\bar v)$.
\end{lemma}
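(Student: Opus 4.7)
The plan is to identify the rational function in question with (a suitable constant multiple of) the function $\eta$ from Proposition \ref{exiisolated}, and then invoke part (ii) of that proposition.

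First I would transport the computation to the function field $\K(\bar x,\bar y)$ via the isomorphism of Lemma \ref{irrmodificato}, where $\bar x=(\bar r+\beta)/(\bar r-\beta)$ and $\bar y=(\bar v+\beta)/(\bar v-\beta)$. Setting $\bar u=h(\bar r)/l(\bar r)$, identity \eqref{keykey} from the proof of Lemma \ref{troppe} gives
\[
\bar t\,\bar x^m=\frac{\bar u+\beta}{\bar u-\beta},
\]
and analogously for $\bar y$ with $\bar v$. Then with $A=(a+\beta)/(a-\beta)$ as in Lemma \ref{l55} a direct computation yields
\[
\frac{A-\bar t\,\bar x^m}{1-\bar t\,\bar x^m}=\frac{a-\bar u}{a-\beta}=\frac{1}{a-\beta}\bigl(a-\tfrac{h(\bar r)}{l(\bar r)}\bigr),
\]
and similarly for the other factor. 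Consequently,
\[
\bigl(a-\tfrac{h(\bar r)}{l(\bar r)}\bigr)\bigl(a-\tfrac{h(\bar v)}{l(\bar v)}\bigr)=(a-\beta)^2\,\frac{(A-\bar t\bar x^m)(A-\bar t\bar y^m)}{(1-\bar t\bar x^m)(1-\bar t\bar y^m)}.
\]
This is precisely the function $\eta$ of Proposition \ref{exiisolated} with the choice $d=(a-\beta)^2$.

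Next I would check that the hypotheses of Proposition \ref{exiisolated} are met. The prerequisite conditions of Proposition \ref{P26apr13} are verified exactly as in the proof that $L_{a,b,\bar t,m}=0$ is absolutely irreducible: $AB\neq(A-1)^3$ follows from $b(a^2-\beta^2)\neq 1$, $A\neq 0$ from $a\in\fq$ together with $\beta\notin\fq$, and the remaining case is excluded by the standing hypothesis \eqref{condizione}. Moreover $A\neq 1$, since $A=1$ would force $\beta=0$, contradicting $\beta\in\fqs\setminus\fq$. Since $(a-\beta)^2$ is a nonzero square in $\K$, multiplying by it preserves the property of being a non-square; hence Proposition \ref{exiisolated}(ii) implies that
\[
\bigl(a-\tfrac{h(\bar r)}{l(\bar r)}\bigr)\bigl(a-\tfrac{h(\bar v)}{l(\bar v)}\bigr)
\]
is not a square in $\K(\bar x,\bar y)=\K(\bar r,\bar v)$, which is the claim.

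The whole argument is thus essentially a rewriting: the only genuinely delicate point is the algebraic identity in the first paragraph, which reduces the rational function in the $(\bar r,\bar v)$-coordinates to the one already analyzed in $(\bar x,\bar y)$-coordinates in Proposition \ref{exiisolated}. Once that identification is made, the non-squareness is inherited directly from the proposition, and the verification of the parameter hypotheses is routine under \eqref{condizione}.
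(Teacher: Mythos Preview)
Your proof is correct and follows essentially the same approach as the paper: both arguments pass to the $(\bar x,\bar y)$-coordinates, identify the rational function with a constant multiple of $(A-\bar t\bar x^m)(A-\bar t\bar y^m)/((1-\bar t\bar x^m)(1-\bar t\bar y^m))$, and invoke Proposition~\ref{exiisolated}. Your constant $(a-\beta)^2$ agrees with the paper's $4\beta^2/(A-1)^2$ since $A-1=2\beta/(a-\beta)$, and you are simply more explicit than the paper in verifying the hypotheses $A\neq 1$ and \eqref{condizione}.
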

\begin{proof} Let $\bar x$ and $\bar y$ be as in the proof of Proposition \ref{archinuovi}, so that
$\K(\bar r,\bar v)=\K(\bar x,\bar y)$ with $f_{A,B,\bar t, m}(\bar x,\bar y)=0$. By straightforward computation,
$$
\Big(a-\frac{h(\bar r)}{l(\bar r)}\Big)\Big(a-\frac{h(\bar v)}{l(\bar v)}\Big)
=\frac{4\beta^2(\bar t \bar x^m-A)(\bar t \bar y^m-A)}{(A-1)^2(\bar t \bar x^m-1)(\bar t \bar y^m-1)} .
$$
Then the assertion follows from Proposition \ref{exiisolated}.
\end{proof}
\begin{proposition}\label{archibishop} Let $P=(a,b)$ be a point in $AG(2,q)$ off $\cX$. Assume that \eqref{condizione} holds. If
\begin{equation}\label{aritmetica2}
q+1-(16m^2-8m+2)\sqrt q \ge 16m^2+24m+1
\end{equation}
then $P$  is bicovered by the points of $K_T$.
\end{proposition}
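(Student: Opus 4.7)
The plan is to find, for the given point $P=(a,b)$, two $\fq$-rational places $\gamma_0,\gamma_1$ of the function field $F=\K(\bar r,\bar v)$ of $M_{a,b,\bar t,m}(R,V)=0$, both lying outside the bad set $E$ from the proof of Proposition \ref{archinuovi}, such that the rational function
\[
\eta=\Big(a-\tfrac{h(\bar r)}{l(\bar r)}\Big)\Big(a-\tfrac{h(\bar v)}{l(\bar v)}\Big)
\]
takes a nonzero square value of $\fq$ at $\gamma_0$ and a nonzero non-square value at $\gamma_1$. By Segre's criterion \eqref{exto}, together with Remarks \ref{coordinate} and \ref{notanodale}, the two secants of $K_T$ through $P$ corresponding to $\gamma_0$ and $\gamma_1$ then exhibit $P$ as external and internal respectively, establishing bicovering.

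The central device will be a pair of double covers of $F$ that detect the two cases separately. By Lemma \ref{exitre}, $\eta$ is not a square in $F$, so Proposition \ref{exiisolated} applies and yields a Kummer extension $F'=F(\bar w)$ with $\bar w^2=\eta$, defined over $\fq$, of genus at most $8m^2-4m+1$. I would then fix any non-square $\tau\in\fq$; since $\tau$ is a square in $\K$, the function $\tau\eta$ is likewise not a square in $F$, and Proposition \ref{exiisolated} applies verbatim to produce a second double cover $F''=F(\bar w')$ with $\bar w'^2=\tau\eta$, of the same genus bound. The covers $F'$ and $F''$ are $\K$-isomorphic but are inequivalent quadratic twists over $\fq$: an $\fq$-rational place $\gamma$ of $F$ with $v_\gamma(\eta)=0$ splits in $F'$ exactly when $\eta(\gamma)$ is a nonzero square in $\fq$, and splits in $F''$ exactly when it is a nonzero non-square.

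Next I would tally the forbidden $\fq$-rational places of $F$. By Proposition \ref{exiisolated}(i) the divisor of $\eta$ is supported on at most $4m+4m^2$ places, and the proof of Proposition \ref{archinuovi} gives $|E|\le 4m^2+8m$; hence the total of forbidden places of $F$ is at most $8m^2+12m$, and its preimage in either $F'$ or $F''$ contains at most $16m^2+24m$ $\fq$-rational places. By the Hasse-Weil bound, $F'$ has at least $q+1-(16m^2-8m+2)\sqrt q$ $\fq$-rational places, which by hypothesis \eqref{aritmetica2} is at least $16m^2+24m+1$. Thus some $\fq$-rational place of $F'$ lies above an $\fq$-rational $\gamma_0\notin E$ of $F$ with $v_{\gamma_0}(\eta)=0$ and $\eta(\gamma_0)$ a nonzero square, producing the external secant. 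The identical argument applied to $F''$ supplies $\gamma_1$ with $\eta(\gamma_1)$ a nonzero non-square, producing the internal secant.

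The delicate point, and the reason the quadratic twist is indispensable, is that a single double cover can only detect the square half of the square/non-square dichotomy; without the companion $F''$ one would need a substantially stronger Hasse-Weil input (roughly doubling the $\sqrt q$-coefficient) to isolate non-square values of $\eta$ by subtraction from the full count of rational places of $F$. The twist produces a companion curve of the same genus whose rational places detect the complementary half, so the single hypothesis \eqref{aritmetica2} suffices symmetrically for both halves.
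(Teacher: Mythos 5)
Your proof is correct and is essentially the paper's own argument: the paper also passes to the double covers $\bar w^2 = c\,\bigl(a-\tfrac{h(\bar r)}{l(\bar r)}\bigr)\bigl(a-\tfrac{h(\bar v)}{l(\bar v)}\bigr)$ of $\K(\bar r,\bar v)$ (phrased with a parameter $c\in\fq^*$ chosen once a square and once a non-square, exactly your two twists $c=1$ and $c=\tau$), bounds the excluded places by $16m^2+24m$ using Proposition \ref{exiisolated} and the set $E$, and applies Hasse--Weil with the genus bound $8m^2-4m+1$ under \eqref{aritmetica2}. No substantive difference.
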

\begin{proof} Let $\K(\bar r, \bar v)$ be the function field of $M_{a,b,\bar t, m}(R,V)=0$, so that
$M_{a,b,\bar t,m}(\bar r,\bar v)=0$. By Proposition \ref{exiisolated} and Lemma \ref{exitre}, for every $c\in \fq^*$
the equation 
$$
\bar w^2=c \Big(a-\frac{h(\bar r)}{l(\bar r)}\Big)\Big(a-\frac{h(\bar v)}{l(\bar v)}\Big)
$$
defines a Kummer extension $\K(\bar r,\bar v,\bar w)$ of $\K(\bar r, \bar v)$
with genus less than or equal to $8m^2-4m+1$. Let $E$ be as in the proof of Proposition \ref{archinuovi}, and let $E'$ be the set of places of $\K(\bar r,\bar v,\bar w)$ that either lie over a place in $E$ or over a zero or a pole of $\big(a-\frac{h(\bar r)}{l(\bar r)}\big)\big(a-\frac{h(\bar v)}{l(\bar v)}\big)$. By Proposition \ref{exiisolated}, together with the proof of Proposition \ref{archinuovi}, an upper bound for the size of $E'$ is $16m^2+24m$. 
 Our assumption on $q$ and $m$, together with Proposition \ref{HaWe}, ensures the existence of at least $16m^2+24m+1$ $\fq$-rational places of $\K(\bar r,\bar v,\bar w)$; hence,
there exists at least one $\fq$-rational place $\gamma_c$ of $\K(\bar r,\bar v,\bar w)$ not in $E'$.
Let 
$$
\tilde r=\bar r(\gamma_c),\quad \tilde v=\bar v(\gamma_c), \quad \tilde w=\bar w(\gamma_c).
$$
Note that $P_c=(\tilde r,\tilde v)$ is an $\fq$-rational affine point of the curve with equation $M_{a,b,\bar t,m}(R,V)=0$. Therefore, by Remark \ref{notanodale}, 
$P$ is collinear with two distinct points
$$
P_{1,c}=Q_{\bar t (\frac{\tilde r+\beta}{\tilde r-\beta})^m},\,P_{2,c}=Q_{\bar t (\frac{\tilde v+\beta}{\tilde v-\beta})^m}\in K_T.
$$
If $c$ is chosen to be a square, then $P$ is external to $P_{1,c}P_{2,c}$; on the other hand, if $c$ is not a square, then $P$ is internal to $P_{1,c}P_{2,c}$. 
This proves the assertion.
\end{proof}

In the final part of this section we deal with points in $\cX$.
\begin{proposition}\label{bicointerni} 
Let $K_{T'}$ be a coset of $K$ such that $K_T\cup K_{T'}$ is an arc. For $u\in \fq$, let $P_u=(u,\frac{1}{u^2-\beta^2})$ be an $\fq$-rational affine point of $\cX$ not belonging to $K_T\cup K_{T'}$ but collinear with a point of $K_T$ and a point of $K_{T'}$.
\begin{itemize}
\item[{\rm{(i)}}] If $u\neq 0$ and \eqref{aritmetica2} holds, then $P_u$ is bicovered by $K_T\cup K_{T'}$.
\item[{\rm{(ii)}}] The point $P_0=(0,-\frac{1}{\beta^2})$ is not bicovered by $K_T\cup K_{T'}$. It is  internal (resp. external) to every segment cut out on $K_T\cup K_{T'}$ by a line through $P_0$  when $q\equiv 1\pmod 4$ (resp. $q\equiv 3\pmod 4$).
\end{itemize}
\end{proposition}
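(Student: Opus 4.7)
The plan is to split the argument into the two statements: (ii) reduces to an explicit algebraic identity, while (i) requires a Hasse--Weil argument on a suitable double cover of $\mathbb{P}^1$.

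For (ii), I first observe that $P_0 = Q_{-1}$, so if $P_1 = Q_{s_1}\in K_T$ and $P_2 = Q_{s_2}\in K_{T'}$ are collinear with $P_0$, the group law on $\cX$ forces $s_1 s_2\cdot(-1)=1$, i.e.\ $s_1 s_2 = -1$. With $x_0=0$ and $x_i=\beta(s_i+1)/(s_i-1)$, substituting $s_1 s_2=-1$ gives
\[
(x_0-x_1)(x_0-x_2)=x_1 x_2=\beta^2\cdot\frac{(s_1+1)(s_2+1)}{(s_1-1)(s_2-1)}=\beta^2\cdot\frac{s_1+s_2}{-(s_1+s_2)}=-\beta^2,
\]
independently of the secant. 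Since $\beta\notin\fq$, $\beta^2$ is a non-square in $\fq$; hence $-\beta^2$ is a square iff $-1$ is a non-square iff $q\equiv 3\pmod 4$. By Segre's criterion \eqref{exto}, $P_0$ is external (resp.\ internal) to every such segment through it when $q\equiv 3\pmod 4$ (resp.\ $q\equiv 1\pmod 4$), proving (ii).

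For (i), the condition $u\neq 0$ forces $v\neq -1$ and the analogous product becomes secant-dependent. I would parametrize $\alpha\in K$ by $z\in\fq$ through $\alpha(z)=((z+\beta)/(z-\beta))^m$, set $s_1(z)=\bar t\alpha(z)$, and use $s_2=(vs_1)^{-1}$. With $h,l\in\fq[Z]$ as in Lemma~\ref{troppe}, and with $h'',l''\in\fq[Z]$ built analogously from $t''$ equal to the $X$-coordinate of $Q_{v\bar t}$ (which is in $\fq$ since $v\bar t\in G$), the formulae of Lemma~\ref{troppe} yield $x_1(z)=h(z)/l(z)$ and $x_2(z)=-h''(z)/l''(z)$, so that
\[
\phi(z):=(x_u-x_1(z))(x_u-x_2(z))=\frac{\bigl(u\,l(z)-h(z)\bigr)\bigl(u\,l''(z)+h''(z)\bigr)}{l(z)\,l''(z)}\in\fq(z).
\]
Rearrangement in the $s$-variable identifies $\phi$ with $(u-\beta)^2(s-v)(v^2 s-1)/\bigl((s-1)(vs-1)\bigr)$, so its divisor in $\K(z)$ comes from the four values $s\in\{v,v^{-2},1,v^{-1}\}$, each lifting to $m$ simple places.

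For each $c\in\fq^*$ I would then consider the Kummer extension $\K(z)(W)$ defined by $W^2=c\,\phi(z)$. Because $v\neq\pm 1$, the four sets of simple places are pairwise disjoint (the mild degeneracy $v^3=1$ collapses $v$ with $v^{-2}$ into a doubled place but still leaves an odd-supported divisor of size $2m$), so $c\,\phi$ is not a square in $\K(z)$ and by Proposition~\ref{teo1cor} the extension is the function field of an irreducible curve over $\fq$ of genus at most $2m-1$. Hasse--Weil (Proposition~\ref{HaWe}) then supplies at least $q+1-(4m-2)\sqrt q$ $\fq$-rational places, which under hypothesis \eqref{aritmetica2} vastly exceeds the $O(m)$ ``bad'' $z$-values to be excluded (principally those giving $s_1=s_2$, i.e.\ $s_1^2 v=1$, at most $2m$ values). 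Choosing $c$ in turn a square and a non-square therefore produces two $\fq$-rational secants through $P_u$, one external and one internal, bicovering $P_u$.

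The main obstacle I anticipate is the careful handling of the exceptional configurations: verifying the genus bound when $v^3=1$ (so two of the four special $s$-values coincide) or when one of $t,t''$ vanishes (so the degrees of $h,l$ or of $h'',l''$drop), and checking that the $z$-values to be discarded---those with $s_1=s_2$, or collapsing $Q_{s_i}$ to $X_\infty$, or coinciding with $T$ or $T'$---remain few enough that the Hasse--Weil surplus under \eqref{aritmetica2} still yields valid secants of both kinds.
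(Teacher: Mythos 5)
Your proposal is correct and takes essentially the same route as the paper's proof: part (ii) via the secant-independent value $(0-x_1)(0-x_2)=-\beta^2$ and the quadratic character of $-1$, and part (i) via a Kummer double cover $\bar w^2=c\,\phi$ of the rational function field parametrizing $K_T$, with non-squareness read off from places of odd valuation, genus at most $2m-1$, Hasse--Weil, and the choice of $c$ a square or a non-square. The only differences are bookkeeping: the paper writes the second intersection point as $s(e)=-\frac{ue+\beta^2}{u+e}$ with $e=e(\bar x)$ defined over $\fq$ instead of introducing your polynomials $h'',l''$, and it separately rules out same-coset secants through $P_0$ (using $P_0\in K$ and $m$ odd), a step your uniform identity renders unnecessary.
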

\begin{proof}
Note that when $P$ ranges over  $K_T$, then the point $Q=\ominus (P_u\oplus P)$  ranges over $K_{T'}$ and is collinear with $P_u$ and $P$.  Recall that  $P$ belongs to $K_T$ if and only if $P=(e,\frac{1}{e^2-\beta^2})$ with 
$$
e=\beta\frac{\bar t(\frac{x+\beta}{x-\beta})^m+1}{\bar t(\frac{x+\beta}{x-\beta})^m-1}
$$ 
 for some $x\in \fq$. In this case,  $Q=(s(e),\frac{1}{s(e)^2-\beta^2})$ with
$
s(e)=-\frac{ue+\beta^2}{u+e}.
$
For an element $\bar x$ transcendental over $\K$ let 
$$
e(\bar x)=\beta\frac{\bar t(\frac{\bar x+\beta}{\bar x-\beta})^m+1}{\bar t (\frac{\bar x+\beta}{\bar x-\beta})^m-1}=\frac{\beta\bar t (\bar x+\beta)^m+\beta(\bar x-\beta)^m}{\bar t (\bar x+\beta)^m-(\bar x-\beta)^m}\in \K(\bar x).
$$
Note that $e(\bar x)$ is defined over $\fq$.
In order to determine whether $P_u$ is bicovered by $K_T\cup K_{T'}$ we need to investigate whether the following rational function is a square in $\K(\bar x)$:
$$
\eta(\bar x)=(u-e(\bar x))(u-s(e(\bar x)))=
\frac{u-e(\bar x)}{u+e(\bar x)}(u^2+2ue(\bar x)+\beta^2)
$$
Let $\gamma $ be a zero of $\bar t(\frac{\bar x+\beta}{\bar x-\beta})^m-1$ in $\K(\bar x)$. Note that since $(m,p)=1$, the polynomial $tZ^m-1$ has no multiple roots in $\K[Z]$. Then the valuation $v_\gamma(e(\bar x))$ of $e(\bar x)$ at $\gamma$ is $-1$.  If in addition $u\neq 0$, then
$
v_\gamma(\eta(\bar x))=v_\gamma(e(\bar x))=-1,
$
whence $\eta(\bar x)$ is not a square in $\K(\bar x)$ and Proposition \ref{teo1cor} applies to $c\eta(\bar x)$ for each $c\in \fq^*$. Since the number of poles of $\eta(\bar x)$ is at most $2m$,  the genus of the Kummer extension $\K(\bar x,\bar w)$ of $\K(\bar x)$ with $\bar w^2=c\eta(\bar x)$ is at most $2m-1$.

Our assumption on $q$, together with the Hasse-Weil bound, yield the existence of an $\fq$-rational place $\gamma_c$ of $\K(\bar x,\bar w)$ which is not a zero nor a pole of $\bar w$. 
Let 
$
\tilde x=\bar x(\gamma_c)$, $ \tilde w=\bar w(\gamma_c)
$,
$$
\tilde e =\beta\frac{\bar t(\frac{\tilde x+\beta}{\tilde x-\beta})^m+1}{\bar t(\frac{\tilde x+\beta}{\tilde x-\beta})^m-1}\quad \text{and}\quad
s(\tilde e)=-\frac{u\tilde e+\beta^2}{u+\tilde e}.
$$
Therefore, if $u\neq 0$, then
$P_u$ is collinear with two distinct points 
$$
P(c)=\Big(\tilde e,\frac{1}{\tilde e^2-\beta^2}\Big)\in K_T\qquad
Q(c)=\Big(s(\tilde e),\frac{1}{s(\tilde e)^2-\beta^2}\Big)\in K_{T'}.
$$
If $c$ is chosen to be a square, then $P_u$ is external to $P(c)Q(c)$; on the other hand, if $c$ is not a square, then $P_u$ is internal to $P(c)Q(c)$. 

Assume now that $u=0$. First note that $P_0$ coincides with $Q_{-1}$, and hence  belongs to $K$. Therefore, as $m$ is odd, $P_0$ cannot be collinear with any two points from the same coset of $K$. 
Assume then that $P_0$ is collinear with $P=(e,\frac{1}{e^2-\beta^2})\in K_T$ 
and $Q=\big(s(e),\frac{1}{s(e)^2-\beta^2}\big)\in K_{T'}$.
It is straightforward to check that $(u-e)(u-s(e))=e\cdot s(e)=-\beta^2$. Since $\beta^2$ is not a square in $\fq$,
 the assertion follows from the well-known fact that $-1$ is a square in $\fq$ precisely when $q\equiv 1 \pmod 4$. 
\end{proof}

\section{Complete arcs and complete caps from cubics with an isolated double point}

Throughout this section $q=p^s$ with $p$ a prime, $p>3$. Also, $\cX$, $G$, $m$, $K$ and $K_T$ are as in Section \ref{covvi}.

We recall the notion of a maximal-$3$-independent subset of a finite abelian group $\mathcal G$,  as given in \cite{MR1075538}.
A subset $M$ of $\mathcal G$ is said to be {\em maximal }$3$-{\em independent} if 
\begin{itemize}
\item[ (a)] $x_1+x_2+x_3\neq 0$ for all $x_1,x_2,x_3\in M$, and 
\item[(b)] for each $y\in \mathcal G\setminus M$ there exist $x_1,x_2\in M$ with $x_1+x_2+y=0$. 
\end{itemize}
If in (b) $x_1\neq x_2$ can be assumed, then $M$ is said to be {\em good}.

Assume that $S$ is a good maximal $3$-independent subset of $G$.
Since three points in $G$ are collinear if and only if their sum is equal to the neutral element,
 $S$ is an arc whose secants cover all the points in $G$. 

For  direct products of abelian groups  of order at least $4$, an explicit construction of good maximal $3$-independent subsets was provided by  Sz\H onyi; see e.g. \cite[Example 1.2]{MR1221589}. If $m$ and $(q+1)/m$ are coprime, such a construction applies to $G$. 

\begin{proposition}\label{mazzi3i} Assume that $m$ and $(q+1)/m$ are coprime. Let $H$ be the subgroup of $G$ of order $m$, so that $G$ is the direct product of $K$ and $H$.  Fix 
two elements $R\in K$ and $R'\in H$ of order greater than $3$, and let $T=R'\ominus 2R$.
Then
$$
\mathcal A=K_T\setminus \{T\} \quad \bigcup \quad (H\oplus R) \setminus \{\ominus  2T\oplus R\}
$$
is a good maximal $3$-independent subset of $G$.
\end{proposition}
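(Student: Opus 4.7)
The plan is to exploit the direct-product decomposition $G \cong K \oplus H$ afforded by the coprimality of $|K|=(q+1)/m$ and $|H|=m$, writing each $g \in G$ uniquely as $g = k \oplus h$ with $k \in K$ and $h \in H$. In these coordinates, $T = R' \ominus 2R$ has $K$-component $\ominus 2R$ and $H$-component $R'$, so $K_T$ is the set of elements with $H$-part $R'$ and $H \oplus R$ is the set of elements with $K$-part $R$. Throughout I will use that $R$ has order at least $4$ in $K$, $R'$ has order at least $4$ in $H$, and that the standing hypothesis $(m,6)=1$ makes $3$ a unit modulo $|H|=m$. Let $\mathcal A_1 := K_T \setminus \{T\}$ and $\mathcal A_2 := (H \oplus R) \setminus \{\ominus 2T \oplus R\}$.

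To check condition (a), I would argue by the distribution of $x_1,x_2,x_3$ between $\mathcal A_1$ and $\mathcal A_2$. If all three lie in $\mathcal A_1$, the $H$-component of $x_1 \oplus x_2 \oplus x_3$ equals $3R'$, which is nonzero by the order hypothesis on $R'$ together with $\gcd(3,m)=1$. If all three lie in $\mathcal A_2$, the $K$-component is $3R$, nonzero by the order hypothesis on $R$. In the mixed case of two elements from $\mathcal A_1$ and one from $\mathcal A_2$, expanding $x_1 \oplus x_2 \oplus x_3 = 0$ in components pins down the $\mathcal A_2$-element to be the one whose $H$-part is $\ominus 2R'$ sitting over the $K$-part $R$, which is precisely the element excluded from $H \oplus R$; the symmetric case forces the $\mathcal A_1$-element to equal $T$, also excluded. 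Hence no collinear triple of elements of $\mathcal A$ exists.

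For condition (b) I would, given $y \in G \setminus \mathcal A$, first attempt the generic construction $x_1 = k \oplus T \in \mathcal A_1$ and $x_2 = h \oplus R \in \mathcal A_2$ with $x_1 \oplus x_2 \oplus y = 0$: projecting onto $K$ and $H$ pins down $k$ and $h$ uniquely in terms of $y$, so the remaining checks are that $k \ne 0$ (so $x_1 \ne T$), that $h$ avoids the excluded value (so $x_2 \in \mathcal A_2$), and that $x_1 \ne x_2$. A direct inspection identifies a short list of exceptional $y$'s for which one of these conditions fails: $y$ in the same $K$-coset as $T$ (forcing $y = T$ since $y \notin \mathcal A$), $y$ in $H \oplus R$ but equal to the excluded element, and the isolated $y$ for which the naive recipe collapses to $x_1 = x_2$. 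For each exceptional $y$ I would instead produce a pair of distinct elements from a single $\mathcal A_i$: the order-at-least-$4$ hypothesis on $R$ and $R'$ guarantees $|K| \ge 4$ and $|H| \ge 4$, leaving enough room after the single exclusion to select such a pair.

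The main obstacle I expect is the bookkeeping behind the \emph{goodness} clause in (b): those few $y$'s for which the natural one-from-each construction would collapse to $x_1 = x_2$, or would land on an element removed by the exclusions, must be handled individually. It is precisely there that the strict inequalities in the order hypotheses on $R$ and $R'$ are essential; the rest of the argument is a routine computation inside $K \oplus H$.
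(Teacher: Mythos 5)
The paper offers no proof of this proposition at all --- it simply appeals to Sz\H onyi's Example 1.2 on good maximal $3$-independent subsets of direct products --- so a self-contained verification in the coordinates of $G=K\oplus H$, as you propose, is the natural thing to do, and your overall plan (3-independence by cases according to how the triple distributes over the two cosets, maximality via the one-from-each-coset recipe plus a short list of exceptional points handled inside a single coset) is exactly the standard check of Sz\H onyi's construction.

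There is, however, one step where you assert something that is false for the statement as literally written. In the case of two points of $K_T\setminus\{T\}$ and one point of $H\oplus R$, the forced third point is, as you say, the point of $H\oplus R$ with $H$-part $\ominus 2R'$, namely $R\ominus 2R'$; but you then declare it to be ``precisely the element excluded from $H\oplus R$''. The element excluded in the statement is $\ominus 2T\oplus R$, and since $T=R'\ominus 2R$ one has $\ominus 2T\oplus R=5R\ominus 2R'$, which equals $R\ominus 2R'$ only when $4R=0$, i.e.\ when $R$ has order exactly $4$. If $\mathrm{ord}(R)>4$ (allowed by the hypothesis ``order greater than $3$''), the excluded point does not even lie in $H\oplus R$, so $R\ominus 2R'\in\mathcal A$, and then the three points $R'$, $\ominus R\oplus R'$, $R\ominus 2R'$ all belong to $\mathcal A$ and sum to zero: the set as printed is not $3$-independent. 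So either the statement carries a slip (the exclusion should read $\ominus 2R'\oplus R$, which is what Sz\H onyi's construction prescribes) or it must be read with $\mathrm{ord}(R)=4$; your proof silently substitutes the corrected exclusion instead of flagging this, which is the one genuine gap in the write-up. With that correction the outline is sound, but the maximality/goodness part is still only a sketch: the exceptional cases require the actual counting (using $m\ge 5$ and $|K|\ge 4$), and for the collapse point $y=\ominus 2R\ominus 2R'$ the replacement pair must be taken inside $H\oplus R$, since for $|K|=4$ every admissible pair inside $K_T$ is destroyed by the exclusions --- so ``a pair from a single $\mathcal A_i$'' needs the right choice of $i$.
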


Let $\mathcal E$  denote the set of points $P$ in $AG(2,q)\setminus \cX$ whose affine coordinates $(a,b)$ do not satisfy \eqref{condizione}. By Remark \ref{quattordici}, the size of $\mathcal E$ is $3$ precisely when $s$ is odd and $p\equiv 2 \pmod 3$; otherwise, $\mathcal E$ consists of the point with coordinates $(0,-\frac{9}{8\beta^2})$.

\subsection{Small complete arcs in $AG(2,q)$}
Let $\mathcal A$ be as in Proposition \ref{mazzi3i}.
We use Propositions \ref{archinuoviVAR}, \ref{fuori}, and \ref{mazzi3i} in order to construct small complete arcs in Galois planes.
Note that \eqref{aritmeticaVAR}  is implied by
$
m\le \frac{\sqrt[4]q}{\sqrt 6}.
$
\begin{theorem}\label{unouno} Let $q=p^s$ with $p>3$ a prime. Let $m$ be a divisor of $q+1$ such that $(m,6)=1$ and $(m,\frac{q+1}{m})=1$. If $m\le \frac{\sqrt[4]q}{\sqrt 6}$, then 
\begin{itemize}
\item if either $s$ is even or $p\equiv 1 \pmod 3$, the set $\mathcal A\cup \mathcal E$ is a complete arc in $AG(2,q)$ of size $m+\frac{q+1}{m}-2$;

\item if $s$ is odd and $p\equiv 2 \pmod 3$, the set
$\mathcal A\cup \mathcal E$ contains a complete arc in $AG(2,q)$ of size at most $m+\frac{q+1}{m}$.
\end{itemize}
\end{theorem}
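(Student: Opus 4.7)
The plan is to combine the group-theoretic maximal $3$-independence of $\mathcal A$ provided by Proposition \ref{mazzi3i} with the geometric covering/bicovering results of Section \ref{covvi}, in order to show that $\mathcal A\cup \mathcal E$ --- possibly after discarding a few points in the second case --- is a complete arc of the stated size. I would argue in three steps: that $\mathcal A$ itself is an arc in $AG(2,q)$, that adjoining $\mathcal E$ preserves the arc property, and finally that the resulting set is complete.

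First I check that $\mathcal A$ is an arc of cardinality $m+(q+1)/m-3$ contained in $AG(2,q)$. Three distinct points of $G\subseteq \cX\setminus\{Y_\infty\}$ are collinear exactly when their $\oplus$-sum equals the neutral element $X_\infty$, so condition (a) of maximal $3$-independence forbids any collinear triple in $\mathcal A$. Moreover $T\notin K$ and the order of $R$ is greater than $3$ (so $R\ne X_\infty$ and $R\notin H$), which forces $X_\infty\notin K_T\cup(H\oplus R)$; hence $\mathcal A$ lies in the affine part of $\cX$. Since $G=K\oplus H$ is a direct product of subgroups of coprime orders, the cosets $K_T$ and $H\oplus R$ share exactly one element $P^\ast$, and a brief computation using $\mathrm{order}(R),\mathrm{order}(R')>3$ shows that $T$ and $\ominus 2T\oplus R$ are distinct from each other and from $P^\ast$, giving $|\mathcal A|=m+(q+1)/m-3$.

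Next I enlarge $\mathcal A$ by $\mathcal E$. The two sets are disjoint because $\mathcal A\subseteq \cX$ while $\mathcal E\cap \cX=\emptyset$, and by Proposition \ref{fuori} no point of $\mathcal E$ is collinear with any pair of $\fq$-rational affine points of $\cX$, so in particular not with any pair of points in $\mathcal A$. In the first case $|\mathcal E|=1$ and $\mathcal A\cup\mathcal E$ is immediately an arc of size $m+(q+1)/m-2$. In the second case $|\mathcal E|=3$; the three points are not themselves collinear, since the line $Y=0$ through $(\pm\beta\sqrt{-3},0)$ does not contain $(0,-9/(8\beta^2))$, so the only possible collinear triples in $\mathcal A\cup\mathcal E$ consist of one point of $\mathcal A$ and two points of $\mathcal E$, and deleting one member of each such triple yields an arc of size at most $m+(q+1)/m$ contained in $\mathcal A\cup\mathcal E$.

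Finally I check completeness pointwise. Let $P\in AG(2,q)$ lie outside the chosen arc. If $P\notin \cX$, then $P$ satisfies \eqref{condizione} (otherwise $P\in \mathcal E$ would lie in the arc), and the hypothesis $m\le \sqrt[4]{q}/\sqrt{6}$ yields \eqref{aritmeticaVAR}; Proposition \ref{archinuoviVAR} then produces two distinct points of $K_T\setminus\{T\}\subseteq \mathcal A$ collinear with $P$. If $P\in \cX$, then $P\in G\setminus\{X_\infty\}$, and the \emph{good} part of maximal $3$-independence supplies distinct $x_1,x_2\in \mathcal A$ with $x_1\oplus x_2\oplus P=X_\infty$, making $P,x_1,x_2$ collinear. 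The main obstacle I anticipate is in the second case of the previous step: one must verify that any $\mathcal E$-point deleted to eliminate a stray collinearity is itself still covered by a secant of the surviving arc. Since Proposition \ref{fuori} rules out coverage by any pair wholly inside $\mathcal A$, this covering must be provided by the very line whose existence forced the deletion, namely the line joining an $\mathcal A$-point and another $\mathcal E$-point; checking that such a line indeed survives is a short but careful case analysis on the three explicit points of $\mathcal E$.
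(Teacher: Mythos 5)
Your proposal is correct and follows essentially the same route the paper intends: Proposition \ref{mazzi3i} gives the arc and the covering of the points of $G$, Proposition \ref{archinuoviVAR} (with \eqref{aritmeticaVAR} guaranteed by $m\le \sqrt[4]{q}/\sqrt{6}$) covers the points off $\cX$ outside $\mathcal E$, and Proposition \ref{fuori} handles the points of $\mathcal E$. The only loose end you flag --- that a deleted point of $\mathcal E$ in the second case remains covered --- can be closed at once by replacing the ad hoc deletions with a maximal arc $\mathcal A'$ satisfying $\mathcal A\subseteq \mathcal A'\subseteq \mathcal A\cup\mathcal E$: maximality forces any point of $\mathcal E\setminus\mathcal A'$ to be collinear with two points of $\mathcal A'$, while all other points are covered by secants of $\mathcal A\subseteq\mathcal A'$ as above.
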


\subsection{Small complete caps in $AG(N,q)$, $N\equiv 0\pmod 4$}

Let $M$ be a maximal $3$-independent subset of the factor group $G/K$ containing $K_T$. Then
the union $S$ of the cosets of $K$ corresponding to  $M$  is a good maximal $3$-independent subset of $G$; see \cite{MR1075538}, Lemma 1, together with Remark 5(5). It has already been noticed that $S$ is an arc whose secants cover all the points in $G$. Note also that $K$ is disjoint from $S$, and hence the point $P_0=(0,-\frac{1}{\beta^2})$ does not belong to $S$.

If either $s$ is even or $p\equiv 1 \pmod 3$, by Propositions \ref{fuori}, \ref{archibishop}, and \ref{bicointerni},  then $S\cup \{(0,-\frac{9}{8\beta^2})\}$ is an almost bicovering arc with center $P_0$, provided that $m$  is small enough with respect to $q$. 

\begin{theorem}\label{pre14mar} Let $q=p^s$ with $p>3$ a prime, and assume that either $s$ is even or $p\equiv 1 \pmod 3$. Let $m$ be a proper divisor of $q+1$ such that $(m,6)=1$ and \eqref{aritmetica2}
holds. Let $K$ be the subgroup of $G$ of index $m$. For $M$  a maximal $3$-independent subset of the factor group $G/K$, the point set
\begin{equation}\label{13maggio}
\mathcal B=\Big(\bigcup_{K_{T_i}\in M}K_{T_i}\Big)\bigcup \mathcal E
\end{equation} 
is an almost bicovering arc in $AG(2,q)$ with center $P_0=(0,-\frac{1}{\beta^2})$. The size of $\mathcal B$ is $\#M\cdot \frac{q+1}{m}+1$.
\end{theorem}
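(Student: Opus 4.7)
The plan is to establish that $\mathcal B$ is an arc of cardinality $\#M\cdot(q+1)/m+1$ and then to show that $P_0\notin\mathcal B$ is the unique point of $AG(2,q)\setminus\mathcal B$ not bicovered by $\mathcal B$. For the arc property and size I would first invoke the comment preceding the theorem: $S=\bigcup_{K_{T_i}\in M}K_{T_i}$ is a good maximal $3$-independent subset of $G$, and since three points of $G$ are collinear exactly when they sum to the neutral element, $S$ is an arc. Under the hypothesis that either $s$ is even or $p\equiv 1\pmod 3$, Remark~\ref{quattordici} forces $\mathcal E=\{(0,-\tfrac{9}{8\beta^2})\}$; this point lies off $\cX$ (since $-\tfrac{9}{8\beta^2}\cdot(-\beta^2)=\tfrac{9}{8}\neq 1$), and Proposition~\ref{fuori} ensures it is not collinear with any two $\fq$-rational affine points of $\cX$, hence not with any two points of $S$. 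The identity coset $K\subset G$ cannot belong to $M$ (condition (a) of $3$-independence with $x_1=x_2=x_3=0$ would fail), so $X_\infty\notin S$ and $\mathcal B$ is a disjoint union of cardinality $\#M\cdot(q+1)/m+1$.

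To cover each $P\in AG(2,q)\setminus(\mathcal B\cup\{P_0\})$, I would split into cases. If $P\notin\cX$, then $P$ must satisfy \eqref{condizione} (otherwise $P\in\mathcal E\subset\mathcal B$), and under \eqref{aritmetica2} Proposition~\ref{archibishop} applied to any single coset $K_{T_i}\in M$ bicovers $P$. If $P\in\cX$ then $P=P_u$ for some $u\in\fq^*$, since $u=0$ yields the excluded $P_0$ and the points at infinity do not lie in $AG(2,q)$. Letting $K_{T'}\notin M$ denote the coset of $P_u$, maximality of $M$ furnishes cosets $K_{T_i},K_{T_j}\in M$ (possibly equal) with $T_i\oplus T_j\oplus T'=0$ in $G/K$; for any $P_1\in K_{T_i}$, the third collinear point $\ominus(P_u\oplus P_1)$ lies in $K_{T_j}$, so $P_u$ is collinear with a point of $K_{T_i}$ and a point of $K_{T_j}$. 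Since $K_{T_i}\cup K_{T_j}\subseteq S$ is an arc, Proposition~\ref{bicointerni}(i) then yields bicovering by $K_{T_i}\cup K_{T_j}\subset\mathcal B$.

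Finally, for the claim that $P_0$ is the unique non-bicovered point, I note that $P_0=Q_{-1}\in K$ (since $m$ is odd and $q+1$ is even, $(q+1)/m$ is even, so $-1$ is an $m$-th power among the $(q+1)$-th roots of unity), while $K\not\subset\mathcal B$ and $P_0\neq(0,-\tfrac{9}{8\beta^2})$, so $P_0\notin\mathcal B$. To show $P_0$ is not bicovered, I would first rule out secants of $\mathcal B$ through $P_0$ that involve $(0,-\tfrac{9}{8\beta^2})$: the line joining these two points is $X=0$, which meets $\cX$ only at $P_0$ itself (as $Y\cdot(-\beta^2)=1$ has the unique solution $Y=-1/\beta^2$), so the other endpoint of such a secant cannot lie in $\bigcup K_{T_i}\subset\cX$. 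Every secant of $\mathcal B$ through $P_0$ therefore joins two points of $\bigcup K_{T_i}$; the constraint $P_0\oplus P\oplus Q=X_\infty$ with $P_0\in K$ forces $Q$ to lie in the coset inverse to that of $P$, placing the endpoints in pairs $K_{T_i}\cup K_{\ominus T_i}$, to which Proposition~\ref{bicointerni}(ii) applies. Since the Segre character there depends only on $q\pmod 4$, all such secants share the same character and $P_0$ is not bicovered. The main obstacle I expect is precisely this final bookkeeping: verifying that adjoining the extra point $(0,-\tfrac{9}{8\beta^2})$ does not introduce a secant of opposite character through $P_0$; the vertical-line computation resolves this cleanly.
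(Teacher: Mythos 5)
Your proposal is correct and follows essentially the same route as the paper, which proves the theorem precisely by combining Proposition \ref{fuori} (arc property after adjoining $\mathcal E$), Proposition \ref{archibishop} (bicovering of points off $\cX$ satisfying \eqref{condizione}), and Proposition \ref{bicointerni} (points of $\cX$, and the exceptional behaviour of $P_0$), with the coset bookkeeping coming from maximal $3$-independence of $M$. Your added details — the vertical-line $X=0$ argument excluding secants through $P_0$ and $(0,-\tfrac{9}{8\beta^2})$, and the observation that secants of $\mathcal B$ through $P_0\in K$ join inverse cosets so that Proposition \ref{bicointerni}(ii) gives a uniform Segre character — are exactly the verifications the paper leaves implicit.
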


When $s$ is odd and $p\equiv 2 \pmod 3$  a further condition on $M$ is needed in order to ensure that $\mathcal B$ as in \eqref{13maggio} is an almost bicovering arc. Note that by Proposition \ref{fuori} there is precisely one point 
 in $G$ collinear with any two points in $\mathcal E$. 

\begin{theorem}\label{pre14marBIS} Let $q=p^s$ with $p>3$ a prime. Assume that $s$ is odd and $p\equiv 2 \pmod 3$.  Let $m$ be a proper divisor of $q+1$ such that $(m,6)=1$ and \eqref{aritmetica2}
holds. Let $K$ be the subgroup of $G$ of index $m$. 
Let $Q_1$ denote the only point in $G$ collinear with 
$(0,-\frac{9}{8\beta^2})$ and  $(\beta\sqrt{-3},0)$; similarly, let $Q_2\in G$ be collinear with $(0,-\frac{9}{8\beta^2})$ and  $(-\beta\sqrt{-3},0)$.
For $M$  a maximal $3$-independent subset of the factor group $G/K$ not containing $K\oplus Q_1$ nor $K\oplus Q_2$, the point set
$$
\mathcal B=\Big(\bigcup_{K_{T_i}\in M}K_{T_i}\Big)\bigcup \mathcal E
$$ 
is an almost bicovering arc in $AG(2,q)$ with center $P_0=(0,-\frac{1}{\beta^2})$. The size of $\mathcal B$ is $\#M\cdot \frac{q+1}{m}+3$.
\end{theorem}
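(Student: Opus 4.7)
The plan is to mirror the proof of Theorem \ref{pre14mar}: first check $\mathcal{B}$ is an arc, next bicover every affine $\fq$-rational point outside $\mathcal{B}\cup\{P_0\}$, and finally show $P_0$ is not bicovered. The new feature compared with Theorem \ref{pre14mar} is that $\mathcal{E}$ now consists of three points $E_1=(0,-\tfrac{9}{8\beta^2})$, $E_2=(\beta\sqrt{-3},0)$, $E_3=(-\beta\sqrt{-3},0)$, so secants of $\mathcal{B}$ with one or two endpoints in $\mathcal{E}$ must be controlled.

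For the arc property, a collinear triple entirely in $\bigcup_{K_{T_i}\in M}K_{T_i}$ is excluded by condition (a) of the $3$-independence of $M$ in $G/K$, while a triple with one $\mathcal{E}$-point and two points of $G$ is excluded by Proposition \ref{fuori}. The new case has two points $E_i,E_j\in\mathcal{E}$ and one point in $\bigcup K_{T_i}\subseteq G$: the line $E_2E_3=\{Y=0\}$ has no affine intersection with $\cX$, and by Proposition \ref{fuori} the lines $E_1E_2$, $E_1E_3$ each meet $G$ in at most one affine point, namely $Q_1$ and $Q_2$, which are excluded from $\bigcup K_{T_i}$ by the hypothesis $K\oplus Q_1,K\oplus Q_2\notin M$. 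The three $\mathcal{E}$-points themselves are not collinear since $E_1\notin\{Y=0\}$.

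For the bicovering step, Proposition \ref{archibishop} bicovers any $P\in AG(2,q)\setminus(\cX\cup\mathcal{E})$ via a single coset $K_{T_i}\in M$ (this is where \eqref{aritmetica2} enters). An affine rational $P_u\in \cX\setminus \mathcal{B}$ with $u\neq 0$ is bicovered by Proposition \ref{bicointerni}(i), after using condition (b) of the maximal $3$-independence of $M$ to produce two cosets in $M$ and concrete points in them collinear with $P_u$. Applying condition (a) to the triple $(K,K,K)$ also yields $K\notin M$, and hence $P_0\notin \mathcal{B}$.

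The main obstacle is confirming that $P_0$ is the \emph{unique} non-bicovered point. Proposition \ref{bicointerni}(ii) takes care of secants of $\mathcal{B}$ through $P_0$ with both endpoints in $\bigcup K_{T_i}$: these are all of the same type (internal if $q\equiv 1\pmod 4$, external if $q\equiv 3\pmod 4$). A secant through $P_0$ with one endpoint in $\mathcal{E}$ and the other in $\bigcup K_{T_i}$ would align an $\mathcal{E}$-point with two affine $\fq$-rational points of $\cX$ (namely $P_0$ and the endpoint), contradicting Proposition \ref{fuori}. A secant through $P_0$ with both endpoints in $\mathcal{E}$ would force $P_0$ onto one of $E_1E_2$, $E_1E_3$, $\{Y=0\}$, which is ruled out by $-1/\beta^2\neq 0$ and $-1/\beta^2\neq -9/(8\beta^2)$. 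The size count $|\mathcal{B}|=\#M\cdot\tfrac{q+1}{m}+3$ is then immediate from the disjointness of the cosets of $K$ and from $\mathcal{E}\cap G=\emptyset$.
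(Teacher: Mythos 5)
Your proposal is correct and follows essentially the same route as the paper, which leaves this proof implicit: the arc property from condition (a) of $3$-independence together with Proposition \ref{fuori} and the exclusion of $K\oplus Q_1$, $K\oplus Q_2$; bicovering of points off $\cX$ via Proposition \ref{archibishop}; bicovering of points of $\cX$ with $u\neq 0$ via condition (b) and Proposition \ref{bicointerni}(i); and the uniqueness of the non-bicovered center $P_0$ via Proposition \ref{bicointerni}(ii) plus the elementary check that no secant through $P_0$ involves a point of $\mathcal E$.
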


We use Theorems \ref{pre14mar} and \ref{pre14marBIS}, together with Proposition \ref{mainP}, in order to construct small complete caps in affine spaces $AG(N,q)$.
Assume that $m=m_1m_2$ with $(m_1,m_2)=1$. Then the factor group $G/K$ is the direct product of two subgroups of order $m_1>4$ and $m_2>4$, and the aforementioned construction by Sz\H onyi \cite[Example 1.2]{MR1221589} of a maximal $3$-independent set $M$ of size $m_1+m_2-3$ applies. It is easily seen that $M$ can be chosen in such a way that it does not contain any two fixed cosets of $K$. 
As \eqref{aritmetica2}  is implied by
$
m\le \frac{\sqrt[4]q}{4},
$
 the following result holds. 
\begin{theorem}\label{duedue}
Let $q=p^h$ with $p>3$, and 
let $m$ be a proper divisor of $q+1$ such that $(m,6)=1$ and 
$m\le \frac{\sqrt[4]q}{4}$. Assume that  $m=m_1m_2$ with $(m_1,m_2)=1$. Then  
for $N\equiv 0 \pmod 4$, $N\ge 4$, there exists a complete cap in $AG(N,q)$ of size less than or equal to
$$
\Big((m_1+m_2-3)\cdot \frac{q+1}{m}+3\Big)q^{\frac{N-2}{2}}.
$$
\end{theorem}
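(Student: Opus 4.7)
The plan is to combine an explicit Sz\H onyi-type construction of a maximal $3$-independent subset of the factor group $G/K$ with the almost bicovering arcs supplied by Theorems~\ref{pre14mar} and~\ref{pre14marBIS}, and then to lift to higher dimension using Proposition~\ref{mainP}.

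First, I would check that the hypothesis $m\le \frac{\sqrt[4]q}{4}$ implies the arithmetic condition \eqref{aritmetica2}. From $m\le \frac{\sqrt[4]q}{4}$ one gets $16m^2\le\sqrt q$, which controls the leading $16m^2\sqrt q$ term on the left of \eqref{aritmetica2}; the remaining $8m\sqrt q$, $16m^2$ and $24m$ contributions are of strictly lower order in $q$, so the inequality follows by a short direct computation.

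Next, since $(m,6)=1$ and $m=m_1m_2$ with $(m_1,m_2)=1$, both $m_i$ are coprime to $6$; excluding the trivial factorization, this forces $m_1,m_2\ge 5$. As $G/K$ is cyclic of order $m$, it is isomorphic to the direct product of cyclic groups of orders $m_1$ and $m_2$, and Sz\H onyi's construction from \cite[Example 1.2]{MR1221589} produces a maximal $3$-independent subset $M$ of $G/K$ of size $m_1+m_2-3$. Inspection of that construction shows that it is flexible enough that $M$ can be chosen so as to avoid any two prescribed cosets of $K$. In particular, when $s$ is odd and $p\equiv 2\pmod 3$, I can arrange $K\oplus Q_1,K\oplus Q_2\notin M$, where $Q_1,Q_2$ are the two distinguished points identified in Theorem~\ref{pre14marBIS}.

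With such a choice of $M$, Theorem~\ref{pre14mar} (when $s$ is even or $p\equiv 1\pmod 3$) or Theorem~\ref{pre14marBIS} (when $s$ is odd and $p\equiv 2\pmod 3$) produces an almost bicovering arc $\mathcal B$ in $AG(2,q)$, centred at $P_0=(0,-\frac{1}{\beta^2})$, of size at most $(m_1+m_2-3)\cdot \frac{q+1}{m}+3$. Applying Proposition~\ref{mainP} to $\mathcal B$ then yields a complete cap in $AG(N,q)$ of the claimed size. The main technical point in this plan is the combinatorial verification that Sz\H onyi's construction can simultaneously avoid two prescribed cosets; once this (together with the routine arithmetic check for \eqref{aritmetica2}) is in place, the remainder is a direct invocation of the results of Sections~\ref{covvi} and~2.
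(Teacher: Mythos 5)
Your proposal follows essentially the same route as the paper: check that $m\le \frac{\sqrt[4]q}{4}$ forces \eqref{aritmetica2}, decompose $G/K$ as a direct product of subgroups of orders $m_1,m_2>4$, take Sz\H onyi's maximal $3$-independent set of size $m_1+m_2-3$ chosen to avoid the two distinguished cosets, invoke Theorem \ref{pre14mar} or \ref{pre14marBIS} to get an almost bicovering arc, and lift it with Proposition \ref{mainP}. This is exactly the paper's (very brief) argument, so the proposal is correct in the same sense and to the same extent as the original.
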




\begin{thebibliography}{90}



\bibitem{ABGP}
Nurdag\"ul Anbar, Daniele Bartoli, Massimo Giulietti, and Irene Platoni.
\newblock Small complete caps from singular cubics.
\newblock Submitted, 2013.

\bibitem{ABGPnodal}
Nurdag\"ul Anbar, Daniele Bartoli, Massimo Giulietti, and Irene Platoni.
\newblock Small complete caps from nodal cubics.
\newblock Submitted, 2013.
\newblock Arxiv:1305.3019 [math.CO].


\bibitem{Anb-Giu}
Nurdag\"ul Anbar and Massimo Giulietti.
\newblock Bicovering arcs and small complete caps from elliptic curves.
\newblock {\em J. Algebraic Combin.}. In press. Published online October 2012.  DOI: 10.1007/s10801-012-0407-8.








\bibitem{BarGiuPrep}
Daniele Bartoli, Giorgio Faina, and Massimo Giulietti.
\newblock  Small complete caps in three-dimensional Galois spaces.
\newblock Submitted, 2013.














\bibitem{MR2656392}
Alexander~A. Davydov, Massimo Giulietti, Stefano Marcugini, and Fernanda
  Pambianco.
\newblock New inductive constructions of complete caps in {${\rm PG}(N,q)$},
  {$q$} even.
\newblock {\em J. Combin. Des.}, 18(3):177--201, 2010.









\bibitem{DAOS}
Alexander~A. Davydov and Patric R.~J. {\"O}sterg{\aa}rd.
\newblock Recursive constructions of complete caps.
\newblock {\em J. Statist. Plann. Inference}, 95(1-2):167--173, 2001.
\newblock Special issue on design combinatorics: in honor of S. S. Shrikhande.





\bibitem{FaPaSc}
Giorgio Faina, Fabio Pasticci, and Lorenzo Schmidt.
\newblock Small complete caps in Galois spaces.
\newblock {\em Ars Combin.}, 105:299--303, 2012.


\bibitem{FGFermat}
Stefania Fanali and Massimo Giulietti.
\newblock On the number of rational points of generalized fermat curves over
  finite fields.
\newblock {\em Int. J. Number Theory}, 8(4):1087--1097, 2012.


\bibitem{GFFA}
Massimo Giulietti.
\newblock On plane arcs contained in cubic curves.
\newblock {\em Finite Fields Appl.}, 8:69--90, 2002.


\bibitem{Gjac}
Massimo Giulietti.
\newblock Small complete caps in {G}alois affine spaces.
\newblock {\em J. Algebraic Combin.}, 25(2):149--168, 2007.

\bibitem{Gjcd}
Massimo Giulietti.
\newblock Small complete caps in {${\rm PG}(N,q)$}, {$q$} even.
\newblock {\em J. Combin. Des.}, 15(5):420--436, 2007.










\bibitem{GPIEE}
Massimo Giulietti and Fabio Pasticci.
\newblock Quasi-Perfect Linear Codes With Minimum Distance 4. 
\newblock {\em IEEE Trans. Inform. Theory}, 53(5): 1928-1935, 2007.




\bibitem{HS1}
James W.~P. Hirschfeld and Leo Storme.
\newblock The packing problem in statistics, coding theory and finite
  projective spaces.
\newblock {\em J. Statist. Plann. Inference}, 72(1-2):355--380, 1998.
\newblock R. C. Bose Memorial Conference (Fort Collins, CO, 1995).



\bibitem{HirschfeldStorme2001} James W.~P. Hirschfeld and Leo Storme, \emph{The packing problem in statistics, coding theory, and finite projective spaces}: update 2001, in: Finite Geometries, Proceedings of the Fourth Isle of Thorns Conference, A. Blokhuis, J. W. P. Hirschfeld, D. Jungnickel and J. A. Thas, Eds., Developments in Mathematics 3, Kluwer Academic Publishers, Boston, (2000), 201-246.






\bibitem{HV} James W.~P. Hirschfeld and Jos{\'e}~Felipe  Voloch.
\newblock {The characterisation of elliptic curves over finite fields}.
\newblock {\em J. Austral. Math. Soc. Ser. A},  45: 275--286, 1988.














\bibitem{LombRad}
Lucio Lombardo-Radice.
\newblock Sul problema dei {$k$}-archi completi in {$S_{2,q}$}. ({$q=p^t$},
  {$p$} primo dispari.).
\newblock {\em Boll. Un. Mat. Ital. (3)}, 11:178--181, 1956.


\bibitem{MR1395760}
Fernanda Pambianco and Leo Storme.
\newblock Small complete caps in spaces of even characteristic.
\newblock {\em J. Combin. Theory Ser. A}, 75(1):70--84, 1996.









\bibitem{MR0149361}
Beniamino Segre.
\newblock Ovali e curve {$\sigma $} nei piani di {G}alois di caratteristica
  due.
\newblock {\em Atti Accad. Naz. Lincei Rend. Cl. Sci. Fis. Mat. Nat. (8)},
  32:785--790, 1962.

\bibitem{MR0238846}
Beniamino Segre.
\newblock Introduction to {G}alois geometries.
\newblock {\em Atti Accad. Naz. Lincei Mem. Cl. Sci. Fis. Mat. Natur. Sez. I
  (8)}, 8:133--236, 1967.

\bibitem{MR0362023}
Beniamino Segre.
\newblock Propriet\`a elementari relative ai segmenti ed alle coniche sopra un
  campo qualsiasi ed una congettura di {S}eppo {I}lkka per il caso dei campi di
  {G}alois.
\newblock {\em Ann. Mat. Pura Appl. (4)}, 96:289--337, 1972.


\bibitem{STI}
Henning Stichtenoth.
\newblock {\em Algebraic function fields and codes}, volume 254 of {\em
  Graduate Texts in Mathematics}.
\newblock Springer-Verlag, Berlin, second edition, 2009.



\bibitem{MR792577}
Tam{\'a}s Sz{\H{o}}nyi.
\newblock Small complete arcs in {G}alois planes.
\newblock {\em Geom. Dedicata}, 18(2):161--172, 1985.


\bibitem{MR1221589}
Tam{\'a}s Sz{\H{o}}nyi.
\newblock Arcs in cubic curves and {$3$}-independent subsets of abelian groups.
\newblock In {\em Combinatorics ({E}ger, 1987)}, volume~52 of {\em Colloq.
  Math. Soc. J\'anos Bolyai}, pages 499--508. North-Holland, Amsterdam, 1988.

\bibitem{TamasRoma}
Tam{\'a}s Sz{\H{o}}nyi.
\newblock Complete arcs in Galois planes: a survey.
\newblock Quaderni del Seminario di Geometrie Combinatorie~94, Dipartimento di
  Matematica ``G. Castelnuovo'', Universit\`a degli Studi di Roma ``La
  Sapienza'', Roma, January 1989.







\bibitem{VOL}
Jos{\'e}~Felipe Voloch.
\newblock
{ On the completeness of certain plane arcs},
\newblock {\em European J. Combin.}, {8}:453-456, 1987.


\bibitem{MR1075538}
Jos{\'e}~Felipe Voloch.
\newblock On the completeness of certain plane arcs. {II}.
\newblock {\em European J. Combin.}, 11(5):491--496, 1990.






\bibitem{ZIR} Francesco Zirilli.
\newblock {Su una classe di k-archi di un piano di Galois},
\newblock {\em Atti Accad. Naz. Lincei Rend.}  54:393-397, 1973.


\end{thebibliography}
\end{document}